\documentclass[reqno,11pt]{amsart}
\usepackage[onehalfspacing]{setspace}
\usepackage{mathtools}
\usepackage{amsfonts}
\usepackage{amssymb}
\usepackage{dsfont}
\usepackage{amsthm}
\usepackage{enumerate}
\usepackage{xfrac}
\usepackage{esint}
\usepackage{color}
\usepackage{graphicx}
\usepackage{wrapfig}
\usepackage{caption}

\usepackage[left=2cm,right=2cm,top=2cm,bottom=2cm]{geometry}

\newtheorem{theorem}{Theorem}[section]
\newtheorem{lemma}[theorem]{Lemma}
\newtheorem{proposition}[theorem]{Proposition}
\newtheorem{corollary}[theorem]{Corollary}

\newcommand{\R}{\mathbb R}
\newcommand{\N}{\mathbb N}

\begin{document}
\pagenumbering{arabic}
\title[Fractional regularity for the Brouwer Degree]{Fractional Sobolev regularity for the Brouwer degree}
\author[De Lellis]{Camillo De Lellis}
\address{Institut f\"ur Mathematik, Universit\"at Z\"urich, CH-8057 Z\"urich}
\email{camillo.delellis@math.uzh.ch}

\author[Inauen]{Dominik Inauen}
\address{Institut f\"ur Mathematik, Universit\"at Z\"urich, CH-8057 Z\"urich}
\email{dominik.inauen@math.uzh.ch}

\begin{abstract} We prove that if $\Omega\subset \mathbb R^n$ is a bounded open set and $n\alpha> {\rm dim}_b (\partial \Omega)  = d$, then the Brouwer degree deg$(v,\Omega,\cdot)$ of any H\"older function $v\in C^{0,\alpha}\left (\Omega, \R^{n}\right)$ belongs to the Sobolev space $W^{\beta, p} (\mathbb R^n)$ for every $0\leq \beta <  \frac{n}{p} - \frac{d}{\alpha}$. This extends a summability result of Olbermann and in fact we get, as a byproduct, a more elementary proof of it. Moreover we show the optimality of the range of exponents in the following sense: for every $\beta\geq 0$ and $p\geq 1$ with $\beta > \frac{n}{p} - \frac{n-1}{\alpha}$ there is a vector field $v\in C^{0, \alpha} (B_1, \mathbb R^n)$ with
$\mbox{deg}\, (v, \Omega, \cdot)\notin W^{\beta, p}$, where $B_1 \subset \mathbb R^n$ is the unit ball. 
\end{abstract}
\maketitle

\section{Introduction}

We are interested in the regularity and summability properties of the Brouwer degree of a H\"older continuous function $v\in C^{0,\alpha}\left (\Omega, \R^{n}\right)$ defined on an open, bounded set $\Omega\subset \R^{n}$ with 
\begin{equation}\label{e:box_counting}
n-1\leq d:=\text{dim}_b (\partial \Omega)<n\, ,
\end{equation}
where $\text{dim}_b$ denotes the box counting dimension. In the recent note \cite{Olbermann} Olbermann showed that the Brouwer degree is an $L^{p}$ function for every $1\leq p < \frac{n\alpha}{d}$. A different proof of the $L^1$ summability when $\partial \Omega$ has a Lipschitz boundary has been given independently by Z\"ust in \cite{Zuest}: in fact, although Z\"ust's proof\footnote{There is a gap in the argument of the main result of \cite{Zuest}: however the proof of the $L^1$ estimate for the Brouwer degree, which in that note is regarded as a technical tool, is correct.}  does not yield the range of summability exponents of Olbermann's proof, it allows to conclude the $L^1$ estimate when each component $v^i$ has different H\"older regularity $C^{0, \alpha_i}$ and $\frac{1}{n-1} \sum_i \alpha_i > 1$. We do not know how to modify Olbermann's argument in order to yield the latter conclusion and thus the results in \cite{Olbermann} and \cite{Zuest} complement each other. A natural conjectural generalization of both is that the degree is in $L^p$ under the assumption that $1\leq p < \frac{1}{d} \sum_i \alpha_i$ (a trivial consequence of Olbermann's theorem is
$L^p$ summability for $p < \frac{n}{d}\min_i \alpha_i$). We do not know how to prove such statement but we can at least prove $L^1$ summability under the assumptions that $d= \text{dim}_b (\partial \Omega)\geq n-1$ and $\sum_i \alpha_i > d$ (cf. Theorem \ref{l:Olbermann}). 

\medskip

The most important point of this note is that Olbermann's idea can be improved to show higher (fractional) Sobolev regularity. In particular the following is our main theorem. As usual $[\cdot]_{C^{0,\alpha}}$ denotes the H\"older and $[\cdot]_{W^{\beta,p}}$ the Gagliardo seminorm when $\beta>0$ and the $L^p$ norm for $\beta=0$.

\begin{theorem}\label{t:main}
 Let $\Omega\subset \R^{n}$ be open and bounded, $d$ be as in \eqref{e:box_counting} and $v\in C^{0,\alpha}\left( \Omega,\R^{n} \right)$, where $\alpha\in ]\frac{d}{n},1]$. Then the Brouwer degree $\mbox{\rm{deg}} (v,\Omega, \cdot)$ satisfies the estimate
 \begin{equation}\label{e:degree-estimate-1}
  [ \mbox{{\rm deg}}\, (v,\Omega,\cdot)]_{W^{\beta,p}} \leq C (\Omega,n,\alpha,\beta,p)[ v]_{C^{0,\alpha}}^{\frac{n}{p}-\beta} \,
 \end{equation}
\begin{equation}\label{e:range}
\mbox{for any pair $(\beta,p)$ with}\qquad p\geq 1\qquad \mbox{and}\qquad 0\leq \beta <\frac{n}{p}-\frac{d}{\alpha}\, .
\end{equation} 
\end{theorem}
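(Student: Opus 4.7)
The plan is to mollify $v$ and run a dyadic argument. Extend $v$ to $\R^n$ without increasing $[v]_{C^{0,\alpha}}$ (up to a constant) and set $v_\varepsilon := v*\rho_\varepsilon$, so that $v_\varepsilon\in C^\infty$ satisfies $\|Dv_\varepsilon\|_\infty \lesssim [v]_{C^{0,\alpha}} \varepsilon^{\alpha-1}$ and $\|v_\varepsilon - v\|_\infty \lesssim [v]_{C^{0,\alpha}} \varepsilon^\alpha$. Homotopy invariance of the Brouwer degree applied to $(1-t)v + tv_\varepsilon$ shows that $u_\varepsilon := \mbox{deg}\,(v_\varepsilon, \Omega, \cdot)$ coincides with $u := \mbox{deg}\,(v,\Omega,\cdot)$ outside $S_\varepsilon := \{y : \mathrm{dist}(y, v(\partial\Omega)) \leq C[v]_{C^{0,\alpha}}\varepsilon^\alpha\}$. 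Covering $\partial\Omega$ by $\lesssim \varepsilon^{-(d+\delta)}$ balls of radius $\varepsilon$ and using the H\"older modulus of $v$ gives $|S_\varepsilon| \leq C_\delta \varepsilon^{n\alpha - d - \delta}$ for every $\delta > 0$.

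Set $\varepsilon_k := 2^{-k}\varepsilon_0$ and $w_k := u_{\varepsilon_{k+1}} - u_{\varepsilon_k}$, so that $u = u_{\varepsilon_0} + \sum_{k \geq 0} w_k$. The base term is smooth with compact support and contributes a harmless constant in $[\,\cdot\,]_{W^{\beta,p}}$. Each $w_k$ is supported in $S_{\varepsilon_k}$, and combining the support bound with H\"older's inequality and Olbermann's $L^{p_0}$ estimate (for $p_0$ chosen close to $n\alpha/d$) yields $\|w_k\|_p \leq C \varepsilon_k^{\tau(p)}$ for some $\tau(p) > 0$ whenever $p < n\alpha/d$.

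The key step is the fractional Sobolev estimate on each $w_k$. I would split the Gagliardo double integral according to the scale of $|y-y'|$ versus the thickness $\varepsilon_k^\alpha$ of $S_{\varepsilon_k}$. On scales $|y-y'| \gtrsim \varepsilon_k^\alpha$, bound $|w_k(y)-w_k(y')|^p$ by $2^p(|w_k(y)|^p+|w_k(y')|^p)$ and integrate the weight, gaining from $\|w_k\|_p \lesssim \varepsilon_k^\tau$ while paying an $\varepsilon_k^{-\alpha\beta p}$ factor. On scales $|y-y'| \lesssim \varepsilon_k^\alpha$, use the representation $u_\varepsilon(y) = \int_\Omega \eta_\sigma(v_\varepsilon(x) - y)\det Dv_\varepsilon(x)\,dx$ for a suitable smooth $\eta_\sigma$, together with $\|\det Dv_\varepsilon\|_\infty \lesssim \varepsilon^{n(\alpha-1)}$, to obtain Lipschitz-type control of $u_\varepsilon$ at these short scales. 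Balancing exponents should give $[w_k]_{W^{\beta,p}} \leq C\varepsilon_k^\kappa$ with $\kappa > 0$ precisely when $\beta < \frac{n}{p} - \frac{d}{\alpha}$; summing the geometric series and tracking the H\"older seminorm through all estimates yields the scaling $[v]_{C^{0,\alpha}}^{n/p-\beta}$.

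The main obstacle is achieving the sharp range. A naive large-scale estimate that uses only $\|w_k\|_p \lesssim \varepsilon_k^\tau$ gives a strict subrange of the claimed $\beta < n/p - d/\alpha$, because $u_\varepsilon$ lacks a uniform $L^\infty$ bound. The extra room must come from structural information about the degree: the super-level sets $\{u_\varepsilon \geq j\}$ have topological boundaries contained in $v_\varepsilon(\partial\Omega)$, so their perimeters inherit the Minkowski-content control coming from the box-counting of $\partial\Omega$. A conceivable alternative route is dual: write $\int u\,\phi = \lim_\varepsilon \int_{\partial\Omega} \Phi(v_\varepsilon)\,dv_\varepsilon^2 \wedge \cdots \wedge dv_\varepsilon^n$ (with $\Phi$ an antiderivative of $\phi$ in the first coordinate) and bound the boundary integral by Z\"ust-style H\"older products on $\partial\Omega$ applied to $\Phi\circ v$ for $\phi$ in the dual Sobolev space $W^{-\beta,p'}$. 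Either way, it is this refined geometric/structural input that promotes the trivial range to the sharp threshold.
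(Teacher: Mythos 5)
Your primary route (mollify, write $u = u_{\varepsilon_0} + \sum_k w_k$ with $w_k = u_{\varepsilon_{k+1}} - u_{\varepsilon_k}$ supported in the thin collar $S_{\varepsilon_k}$, and estimate each $w_k$ in $W^{\beta,p}$ by splitting the Gagliardo integral at scale $\varepsilon_k^\alpha$) does not prove the theorem, and — to your credit — you essentially say so. The large-scale estimate you compute gives $\beta < (n - d/\alpha)(1/p - d/(n\alpha))$, which is strictly smaller than the claimed threshold $n/p - d/\alpha$ whenever $p < n\alpha/d$, and there is no way to recover the deficit from a short-scale estimate of the kind you sketch: the formula $u_\varepsilon(y) = \int_\Omega \eta_\sigma(v_\varepsilon(x)-y)\det Dv_\varepsilon(x)\,dx$ is not an identity for $u_\varepsilon$ but for the mollification $u_\varepsilon * \eta_\sigma$, and since $u_\varepsilon$ is integer-valued and constant off $v_\varepsilon(\partial\Omega)$ it admits no pointwise Lipschitz control; its short-scale Gagliardo integral is governed by the geometry of the jump set $v_\varepsilon(\partial\Omega)$, which your argument never quantifies. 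So the direct dyadic approach, as written, stops at a subrange.

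The ``conceivable alternative'' you mention at the end is in fact the key, and it is what the paper actually does — but it needs to be developed, not merely named. The correct route is: (i) an estimate of the form $\left|\int \mbox{\rm deg}\,(v,\Omega,y)\,\mbox{\rm div}\,\psi(y)\,dy\right| \leq C\,[v]_{C^{0,\alpha}}^{n-1+\gamma}[\psi]_{C^{0,\gamma}}$ for any $\gamma$ with $(n-1+\gamma)\alpha > d$ (this is proved in the paper by a Whitney-type extension of $v$ that is smooth in the interior with $|\nabla\tilde v|\lesssim \mathrm{dist}(\cdot,\partial\Omega)^{\alpha-1}$, the area formula, and the observation that each $V_j = (v^1,\dots,\psi^j\circ v,\dots,v^n)$ has components in $C^{0,\alpha}$ or $C^{0,\alpha\gamma}$ and hence falls under the $L^1$ degree estimate with mixed exponents); and (ii) a duality step: given a test function $\varphi$ one produces $\psi$ with $\mbox{\rm div}\,\psi=\varphi$ on a ball containing $v(\Omega)$ and $[\psi]_{C^{0,\gamma}} \lesssim \|\varphi\|_{(\mathcal H^{\beta,p})^*}$ (for $\beta=0$ this is just $\psi = -\nabla(\Delta^{-1}\varphi)$ together with Calder\'on--Zygmund and Morrey embeddings; for $\beta>0$ one works with Bessel potentials, $\zeta = \mathcal J_2\varphi$, $\|\zeta\|_{\mathcal H^{2-\beta,p'}} \leq \|\varphi\|_{(\mathcal H^{\beta,p})^*}$, a cutoff, and the embedding $\mathcal H^{2-\beta,p'}\subset C^{1,\gamma}$ with $\gamma = 1-\beta-n/p'$). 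Combining (i) and (ii) and invoking reflexivity of $\mathcal H^{\beta,p}$ gives the estimate in $\mathcal H^{\beta,p}$, hence in $W^{\beta',p}$ for $\beta'<\beta$, and the full range \eqref{e:range} follows. The arithmetic of this duality is precisely what makes the threshold $\beta < n/p - d/\alpha$ appear, which your direct decomposition cannot reach.

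In short: your diagnosis of the obstruction is accurate and your instinct to go dual is right, but as submitted the argument has two genuine gaps — the unproven and false ``Lipschitz-type control'' of $u_\varepsilon$ at short scales, and the absence of any actual estimate on the boundary integral $\int_{\partial\Omega}\Phi(v_\varepsilon)\,dv_\varepsilon^2\wedge\cdots\wedge dv_\varepsilon^n$ in terms of $\|\varphi\|_{(\mathcal H^{\beta,p})^*}$. Both must be replaced by the divergence-test estimate together with the construction of the Hölder vector field $\psi$.
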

 Observe that the endpoints of \eqref{e:range} form the segment $\sigma = \{\beta = \frac{n}{p} - \frac{d}{\alpha}\}$ and if we let $(\beta_1, p_1) = (\frac{n\alpha - n+1}{\alpha}, 1)$ be the right extremum of the segment, then $W^{\beta_1, p_1}$ embeds in $W^{\beta, p}$ for every $(\beta, p)\in \sigma$. In particular our theorem has the following obvious corollary.

\begin{corollary}\label{c:convergence}
 Let $\Omega\subset \R^{n}$ be open and bounded, $d$ be as in \eqref{e:box_counting} and $\{v_k\}\subset C^{0,\alpha}\left( \Omega,\R^{n} \right)$ a bounded sequence converging uniformly to $v$, where $\alpha\in ]\frac{d}{n},1]$. Then, for every pair $(\beta, p)$ as in \eqref{e:range}, the sequence $\mbox{{\rm deg}}\, (v_k, \Omega, \cdot)$ converges to ${{\rm deg}}\, (v, \Omega, \cdot)$ strongly in $W^{\beta, p}$.
\end{corollary}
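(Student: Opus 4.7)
The plan is to combine the uniform Sobolev bound furnished by Theorem \ref{t:main} with classical almost-everywhere pointwise convergence of degrees under uniform convergence of the maps, and to upgrade this combination to strong $W^{\beta,p}$-convergence via a Rellich--Kondrachov compact embedding. There are three ingredients, which I would carry out in the following order.

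First, I would exploit the openness of the range \eqref{e:range}: given $(\beta, p)$ satisfying it, I pick $\beta^{\ast} \in (\beta, \tfrac{n}{p} - \tfrac{d}{\alpha})$ so that $(\beta^{\ast}, p)$ still lies in the range. Applying Theorem \ref{t:main} to each $v_k$ and using the assumed uniform bound on $[v_k]_{C^{0,\alpha}}$, I obtain a uniform bound on $[\mbox{deg}\,(v_k, \Omega, \cdot)]_{W^{\beta^{\ast}, p}(\R^n)}$. Since $\{v_k\}$ is also bounded in $L^{\infty}$, the degrees $\mbox{deg}\,(v_k, \Omega, \cdot)$ and $\mbox{deg}\,(v, \Omega, \cdot)$ are all supported in a common ball $B_R$.

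Second, I would establish almost-everywhere pointwise convergence of the degrees. The key observation is that a H\"older map of exponent $\alpha$ sends a set of box-counting dimension $d$ to one of box-counting dimension at most $d/\alpha$; since $\alpha > d/n$ this is strictly less than $n$, so $|v(\partial \Omega)| = 0$. For any $y \notin v(\partial \Omega)$ one has $\rho := \mathrm{dist}(y, v(\partial \Omega)) > 0$, and once $\|v_k - v\|_{L^{\infty}(\partial\Omega)} < \rho$ the linear homotopy $(1-t) v + t v_k$ avoids $y$ on $\partial \Omega$; by homotopy invariance of the Brouwer degree, $\mbox{deg}\,(v_k, \Omega, y) = \mbox{deg}\,(v, \Omega, y)$ for all large $k$. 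Hence $\mbox{deg}\,(v_k, \Omega, \cdot) \to \mbox{deg}\,(v, \Omega, \cdot)$ for almost every $y \in \R^n$.

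Third, I would combine the two. Fix a bounded Lipschitz domain $\Omega' \supset \overline{B_R}$. By the standard Rellich--Kondrachov theorem for fractional Sobolev spaces, the embedding $W^{\beta^{\ast}, p}(\Omega') \hookrightarrow W^{\beta, p}(\Omega')$ is compact (since $\beta^{\ast} > \beta$); together with the common compact support, and the easy fact that for functions supported in $B_R$ the $W^{\beta,p}(\R^n)$-norm is controlled by the $W^{\beta,p}(\Omega')$-norm, this implies that every subsequence of $\{\mbox{deg}\,(v_k, \Omega, \cdot)\}$ admits a further subsequence converging strongly in $W^{\beta, p}(\R^n)$. The a.e. convergence forces the limit to be $\mbox{deg}\,(v, \Omega, \cdot)$, and the usual subsequence principle then yields convergence of the full sequence. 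The main obstacle is the second step: without the H\"older-image argument giving $|v(\partial \Omega)| = 0$, one would only know degree stability at isolated safe points, which is not enough for a.e. convergence. Once that is in place, the rest is soft functional analysis.
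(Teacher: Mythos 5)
Your proof is correct and follows essentially the same route as the paper: pick a $\beta^{*}\in(\beta,\tfrac{n}{p}-\tfrac{d}{\alpha})$ to obtain a uniform $W^{\beta^{*},p}$-bound from Theorem~\ref{t:main}, combine this with the common compact support in some $B_R$, the a.e.\ pointwise convergence of the degrees on $\R^n\setminus v(\partial\Omega)$ (with $v(\partial\Omega)$ Lebesgue-null, the paper's Lemma~\ref{l:measure_zero}), and the compact embedding $W^{\beta^{*},p}\hookrightarrow W^{\beta,p}$ on a bounded domain, closing with the subsequence principle. Your shortcut for $|v(\partial\Omega)|=0$ via the box-counting dimension $d/\alpha<n$ of the H\"older image is a mild repackaging of Lemma~\ref{l:measure_zero} and changes nothing of substance.
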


As already mentioned above, our proof is built upon the ideas of Olbermann in \cite{Olbermann}. However we report also a self-contained and more elementary argument for his result: the key simplification can be found in the direct elementary proof of Theorem \ref{l:Olbermann} below. A part of this theorem is shown in \cite{Olbermann} using tools from interpolation theory. We instead derive it directly and use our approach to extend Z\"ust's result in the sense mentioned above. For the reader's convenience we then show how to recover Olbermann's higher integrability in few lines, although the argument is already contained in \cite{Olbermann}. From Theorem \ref{l:Olbermann} we then derive Theorem \ref{t:main} using heavier machinery from harmonic analysis.

\medskip

It has already been shown in \cite{Olbermann} that, when $\beta =0$ and $d>n-1$, the range of exponents in Theorem \ref{t:main} cannot be extended beyond the endpoints: more precisely, \cite[Theorem 1.2]{Olbermann} proves that, if $p> \frac{n\alpha}{d}$, then there is a fixed open set $\Omega$ with $\text{dim}_b (\partial \Omega) = d$ and a bounded sequence $\{v_k\}\subset C^{0, \alpha} (\Omega)$ for which
$\|\text{deg} (v_k, \Omega, \cdot)\|_{L^p} \uparrow \infty$ (note however, that the proof in \cite{Olbermann} {\em does not} yield a $v\in C^{0, \alpha} (\Omega)$ for which $\text{deg} (v, \Omega, \cdot)\not \in L^p$, because the sequence produced by the argument converges to $0$, cf. \cite[Section 4.2]{Olbermann}). In this note we discuss the optimality of the range in the case $d=n-1$: our main conclusion is the following theorem, which, by Sobolev embedding, has the immediate Corollary \ref{c:sharpsobolev}. 

\begin{theorem}\label{t:sharpmoredim}
For any $n\geq 2$, $p\geq 1$ and $\alpha<\frac{p(n-1)}{n}$ there is $v\in C^{0,\alpha}\left( B_1, \R^{n} \right)$ such that $\mbox{\rm deg}\, (v,B_1,\cdot)\notin L^{p}$, where $B_1 \subset \mathbb R^n$ is the unit ball.  
\end{theorem}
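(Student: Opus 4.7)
The plan is to reduce to a boundary construction via harmonic extension. If $\gamma \in C^{0,\alpha}(S^{n-1}, \R^n)$, then its Poisson extension $v:\overline{B_1}\to \R^n$ satisfies $[v]_{C^{0,\alpha}}\leq C_n[\gamma]_{C^{0,\alpha}}$ by classical potential-theoretic regularity, and for every $y\notin\gamma(S^{n-1})$ the Brouwer degree $\deg(v,B_1,y)$ equals the winding number $w(\gamma,y)$ (i.e.\ the topological degree of $(\gamma-y)/|\gamma-y|:S^{n-1}\to S^{n-1}$). Hence it suffices to construct $\gamma\in C^{0,\alpha}(S^{n-1},\R^n)$ with $w(\gamma,\cdot)\notin L^p(\R^n)$.

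I will construct $\gamma$ as a scale-by-scale superposition of disjointly supported ``bubbles.'' Fix a standard Lipschitz map $\phi_N\colon S^{n-1}\to S^{n-1}$ of degree $N^{n-1}$ with Lipschitz constant $\lesssim N$. For a geodesic ball $U\subset S^{n-1}$ of radius $r$ and a basepoint $p_0\in S^{n-1}$, let $\Phi\colon U\to S^{n-1}$ be the standard radial quotient collapsing $\partial U$ to $p_0$ (Lipschitz constant $\lesssim 1/r$; explicitly, in polar coordinates $x=s\omega$ on $U\simeq B^{n-1}_r$, $\Phi(s\omega)=(\sin(\pi s/r)\omega,-\cos(\pi s/r))$). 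Set $y^{*}:=-\rho\,\phi_N(p_0)$ and define the bubble
\[
\psi_{N,r,\rho}(x):=y^{*}+\rho\,\phi_N(\Phi(x)),\qquad x\in U.
\]
Then $\psi$ vanishes on $\partial U$ and has image on the sphere $y^{*}+\rho S^{n-1}$; the homotopy $(t,x)\mapsto(t(y^{*}-y)/\rho+\phi_N(\Phi(x)))/|\,\cdot\,|$, which is well-defined for $|y-y^{*}|<\rho$, shows that the winding of $\psi$ around every $y\in B(y^{*},\rho)$ equals $\pm N^{n-1}$. A direct interpolation $[\cdot]_{C^{0,\alpha}}\lesssim L^\alpha M^{1-\alpha}$ applied with Lipschitz constant $L\sim \rho N/r$ and sup-norm $M\sim\rho$ yields $[\psi_{N,r,\rho}]_{C^{0,\alpha}}\leq C\rho N^\alpha r^{-\alpha}$, so saturating with $\rho=r^\alpha N^{-\alpha}$ keeps this seminorm uniformly bounded. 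By rotating $\phi_N$ one can prescribe $y^{*}$ as any point on the sphere $\rho\,S^{n-1}$, and thus place $K_n$ copies of the bubble per scale with pairwise disjoint target balls $B(y^{*}_{k,i},\rho_k/2)\subset\R^n$.

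Now pick $N_k=2^k$, $r_k=2^{-\beta k}$ with $\beta\in(0,\,p(n-1)/(n\alpha)-1)$, a nonempty interval under the hypothesis $\alpha<p(n-1)/n$, and let $M_k=K_n$ for every $k$; the supports fit disjointly in $S^{n-1}$ since $\sum_k M_k r_k^{n-1}\lesssim\sum_k 2^{-k\beta(n-1)}<\infty$. Set $\gamma:=\sum_{k,i}\psi_{k,i}$ (extension by zero off each support). The vanishing of each $\psi_{k,i}$ on $\partial U_{k,i}$ gives $[\gamma]_{C^{0,\alpha}}\leq 2\max_{k,i}[\psi_{k,i}]_{C^{0,\alpha}}\leq C$ by a standard gluing estimate. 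Since $(\gamma-y)/|\gamma-y|$ is constant on $S^{n-1}\setminus\bigcup U_{k,i}$, a collapsing-to-wedge argument makes the winding additive: for $y\in B(y^{*}_{k,i},\rho_k/2)$ only the $(k,i)$-piece contributes, so $|w(\gamma,y)|=N_k^{n-1}$. Therefore
\[
\int_{\R^n}|w(\gamma,y)|^p\,dy\;\geq\;c\sum_k M_k\,N_k^{p(n-1)}\rho_k^n\;=\;c\,K_n\sum_k 2^{k\,[\,p(n-1)-n\alpha(1+\beta)\,]}\;=\;\infty
\]
by the choice of $\beta$, which completes the construction. The main obstacles are: (i) realising a bubble that vanishes on $\partial U$ yet has nontrivial winding around $y^{*}\neq 0$, reconciled by the explicit formula above (null-homotopy of $\psi|_{\partial U}$ in $\R^n\setminus\{0\}$ is not an obstruction to nontrivial degree around $y^{*}\neq 0$); and (ii) verifying winding additivity under disjoint-support superposition, resolved by the collapsing-to-wedge argument.
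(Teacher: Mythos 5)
Your approach is genuinely different from the paper's. The paper builds a single explicit boundary map whose image is a connected chain of mutually tangent spheres $S_k$, each traversed $\sim c_k$ times, and reads the degree directly off a radial extension; you instead glue disjointly supported ``bubbles'' $\psi_{k,i}$ on geodesic balls $U_{k,i}\subset S^{n-1}$, each mapping onto a small sphere in $\R^n$ with multiplicity $N_k^{n-1}$, and argue via additivity of winding numbers. Both constructions ultimately reduce to the divergence of the same scale series $\sum_k N_k^{p(n-1)}\rho_k^n$, and the Hölder bookkeeping (interpolation of Lipschitz and sup norms, gluing across vanishing boundary data) is entirely sound.

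However, there is a concrete gap in the degree computation. Because each $\psi_{k,i}$ must vanish on $\partial U_{k,i}$ to glue continuously, your formula forces $|y^*_{k,i}|=\rho_k$, so \emph{every} target ball $B(y^*_{k,i},\rho_k)$ has the origin on its boundary; they all accumulate at $0$ and cannot be made pairwise disjoint (indeed a point $y$ near $0$ on the $\hat y^*_{k',i'}$-side lies in $B(y^*_{k',i'},\rho_{k'})$, and one cannot keep the direction of each new $\hat y^*_{k,i}$ obtuse to all the previous ones). Consequently the assertion that ``for $y\in B(y^*_{k,i},\rho_k/2)$ only the $(k,i)$-piece contributes'' fails: in general several of the pieces $\psi_{k',i'}$ with $k'<k$ also wind around $y$, with contributions $\pm N_{k'}^{n-1}$ whose signs you have not pinned down (you only claim ``equals $\pm N^{n-1}$''). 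Since $\sum_{k'<k}N_{k'}^{n-1}$ is comparable to $N_k^{n-1}$, unfavourable signs could in principle cancel the main term, and the chain $|w(\gamma,y)|=N_k^{n-1}$ is therefore unjustified. The fix is to make the argument sign-coherent: choose $\phi_{N}$ and the quotient maps $\Phi_{k,i}$ so that \emph{every} local contribution is $+N_{k'}^{n-1}$ (with a uniform orientation convention on the quotient spheres $U_{k,i}/\partial U_{k,i}$), whence $w(\gamma,y)\geq N_k^{n-1}$ on $B(y^*_{k,i},\rho_k/2)$ regardless of whether other pieces also contribute. With that modification (and a careful verification that the half-radius balls $B(y^*_{k,i},\rho_k/2)$ themselves can be made pairwise disjoint under the constraint $|y^*_{k,i}|=\rho_k$, which needs an explicit placement argument since the balls subtend a fixed angular size from the origin) the proof goes through. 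As written, though, the winding step is incomplete.
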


\begin{corollary}\label{c:sharpsobolev}
For any $n\geq 2$, $p\geq 1$, $\alpha \geq 0$ and $\beta > \frac{n}{p}-\frac{n-1}{\alpha}$ there is $v\in C^{0,\alpha}\left( B_1, \R^{n} \right)$ with $\mbox{{\rm deg}}(v,B_1,\cdot) \notin W^{\beta,p}$. 
\end{corollary}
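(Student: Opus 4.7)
The plan is to reduce Corollary \ref{c:sharpsobolev} directly to Theorem \ref{t:sharpmoredim} by means of the fractional Sobolev embedding, exploiting that the Brouwer degree of a continuous map on a bounded set is automatically compactly supported. Given $(\alpha,p,\beta)$ with $\beta > \frac{n}{p} - \frac{n-1}{\alpha}$, the target inclusion I want to use is the fractional Sobolev embedding $W^{\beta,p}(\R^n) \hookrightarrow L^{q}(\R^n)$ with conjugate exponent $q = \frac{np}{n-\beta p}$, which is available only when $\beta p < n$.

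The first step is therefore to reduce to the regime $\beta p < n$. Since $\frac{n}{p} - \frac{n-1}{\alpha} < \frac{n}{p}$, I can pick
\[
\beta' \in \left(\max\left\{0,\tfrac{n}{p}-\tfrac{n-1}{\alpha}\right\},\,\min\left\{\beta,\tfrac{n}{p}\right\}\right),
\]
and by the monotonicity $W^{\beta,p}(\R^n) \subseteq W^{\beta',p}(\R^n)$ for $\beta \geq \beta' \geq 0$ (a standard consequence of splitting the Gagliardo double integral into $\{|x-y|\leq 1\}$ and $\{|x-y|>1\}$), it is enough to exhibit $v$ with $\mbox{\rm deg}\,(v,B_1,\cdot) \notin W^{\beta',p}$. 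So I may assume $\beta p < n$ from the outset.

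Next, with $q = \frac{np}{n-\beta p} \geq p \geq 1$, a direct algebraic manipulation (clearing denominators in $\alpha < \frac{q(n-1)}{n}$) shows that the hypothesis $\beta > \frac{n}{p} - \frac{n-1}{\alpha}$ is equivalent to $\alpha < \frac{q(n-1)}{n}$, which is precisely the hypothesis of Theorem \ref{t:sharpmoredim} with $q$ in place of $p$. Applying that theorem produces $v \in C^{0,\alpha}(B_1,\R^n)$ with $\mbox{\rm deg}\,(v,B_1,\cdot) \notin L^{q}(\R^n)$. Since this degree is supported in the bounded set $v(\overline{B_1})$, the embedding $W^{\beta,p}(\R^n) \hookrightarrow L^{q}(\R^n)$ forces $\mbox{\rm deg}\,(v,B_1,\cdot) \notin W^{\beta,p}$, as desired.

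I do not anticipate any real obstacle: the entire argument is a book-keeping reduction. The only items requiring care are the algebraic equivalence of the two threshold conditions and the monotonicity of the $W^{\beta,p}$-scale used to dispose of the case $\beta p \geq n$, both of which are routine.
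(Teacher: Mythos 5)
Your proof is correct and is exactly the argument the paper intends: the authors state that Corollary \ref{c:sharpsobolev} follows from Theorem \ref{t:sharpmoredim} ``by Sobolev embedding,'' which is precisely your reduction (the algebraic equivalence $\beta > \frac{n}{p} - \frac{n-1}{\alpha} \Leftrightarrow \alpha < \frac{q(n-1)}{n}$ with $q = \frac{np}{n-\beta p}$, plus the monotonicity of the $W^{\beta,p}$ scale to dispose of $\beta p \geq n$). One small remark: the appeal to compact support of the degree is not needed for this step, since $W^{\beta,p}(\R^n) \hookrightarrow L^q(\R^n)$ already holds without support restrictions when $\beta p < n$ and $q = \frac{np}{n - \beta p}$; the implication ``$\mathrm{deg} \notin L^q \Rightarrow \mathrm{deg} \notin W^{\beta,p}$'' follows immediately.
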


The case of the endpoints is certainly more subtle. Indeed, if $v\in C^{0,1}$ and $\Omega$ is a bounded Lipschitz domain, then the area formula and elementary considerations in degree theory imply that $\text{deg}\, v\in BV$ (the space of functions of bounded variation). In fact, with a little help from the theory of $BV$ functions and Caccioppoli sets, the latter statement can be shown even under the more technical assumption
that the  $n-1$-dimensional Hausdorff measure of $\partial \Omega$ is finite. Therefore:
\begin{itemize}
\item $\text{deg} (v, \Omega, \cdot) \in L^{n/(n-1)}$, by the Sobolev embedding of $BV (\mathbb R^n)$, hence showing that the endpoint $(\beta, p) = (0, \frac{n}{n-1})$ could be included if we assume that $\partial \Omega$ has finite $n-1$-dimensional measure;
\item since the degree takes integer values and vanishes on $\mathbb R^n \setminus v (\overline{\Omega})$, it belongs to $W^{1,1}$ only if it vanishes identically: hence, even assuming that $\partial \Omega$ has finite $n-1$-dimensional measure, the endpoint $(\beta, p) = (1,1)$ can be included only if we replace $W^{1,1}$ with $BV$.  
\end{itemize}

\subsection{Acknowledgments} The research of both authors has been supported by the grant $200021\_159403$ of the Swiss National Foundation.

\section{First estimate and change of variables}\label{s:elementary}

The starting point of Olbermann's proof is the classical change of variable formula 
\begin{equation}\label{e:Olb}
\int_{\R^{n}}\varphi(y)\, \text{deg}(v,\Omega,y)\,dy = \int_{\Omega} \varphi(v(x))\, \text{det}Dv(x)\,dx\,,
\end{equation}
which is valid if $v$ is regular enough (compare e.g. \cite{Fed}). By representing the integrand $\varphi(v(x))\text{det}Dv(x)$ as a sum of weakly defined Jacobian determinants, using Stokes theorem and tools from interpolation theory Olbermann manages to bound the right hand side of \eqref{e:Olb} by a (suitable power of the) $C^{0,\alpha}$ norm of $v$ and the $L^{p'}$ norm of $\varphi$, where $\alpha$ is as above and $p'$ is conjugate to $p$. In fact, implicit in his proof is the estimate \eqref{e:Olbermannestimate} below, which will play a crucial role for us as well. On the other hand our elementary argument yields immediately, as a byproduct, that the degree is an $L^1$ function and thus we do not have to resort to any weak notion of Jacobian determinant. In passing we also get a simple proof of Z\"ust's result, together with an appropriate generalization. 

\begin{theorem}\label{l:Olbermann}
Let $\Omega\subset \R^{n}$, $n$ and $d$ be as in Theorem \ref{t:main}. Assume that $v= (v^{1}, \ldots, v^{n})$ is a continuous map $v: \Omega \to \mathbb R^n$ for which $v^i \in C^{0, \alpha_i}$. If $\sum_i \alpha_i > d$, then $\mbox{{\rm deg}}\, (v, \Omega, \cdot)\in L^1$ and 
\begin{equation}\label{e:Zuest}
\|\mbox{{\rm deg}}\, (v, \Omega, \cdot)\|_{L^1} \leq C (\Omega, n, \alpha_1, \ldots, \alpha_n) \prod_{i=1}^{n} [v^i]_{C^{0, \alpha_i}}\, .
\end{equation}
If in addition $\alpha = \min_i \alpha_i > \frac{d}{n}$, then for any $\psi \in C^{1}\left (\R^{n},\R^{n}\right )$ we have
\begin{equation}\label{e:Olbermannestimate}
\left| \int_{\R^{n}}\mbox{{\rm deg}}\, \left (v,\Omega,y\right )\mbox{{\rm div}}\, \psi(y)\,dy\right|\leq C (\Omega, n, \alpha, \gamma)  [v]_{C^{0,\alpha}(\Omega)}^{n-1+\gamma}[\psi]_{C^{0,\gamma}(B_R)}\,,
\end{equation}
where $\gamma\in (0,1)$ is such that $(n-1+\gamma)\alpha > d$ and $R>0$ such that $\overline{v\left (\Omega\right )} \subset B_R(0)$.
\end{theorem}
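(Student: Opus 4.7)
The plan is to reduce to smooth $v$ by mollification and exploit the Piola identity $\partial_j(\mathrm{cof}\,Dv)_{ij}=0$ to express the integrand as an exact divergence, which is integrated on an interior shrinkage of $\Omega$ by Gauss--Green while the thin boundary layer is controlled by the box-counting dimension hypothesis.

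In the smooth case, setting $W := \mathrm{cof}(Dv)^T\psi(v)$ and using Piola plus the chain rule yields $\mathrm{div}\,W = \det(Dv)\,(\mathrm{div}\,\psi)(v)$, so combining with \eqref{e:Olb} gives $\int_{\R^n}\mathrm{div}\,\psi(y)\,\mathrm{deg}(v,\Omega,y)\,dy = \int_\Omega\mathrm{div}\,W\,dx$. Introduce $\Omega_\eta := \{x\in\Omega: d(x,\partial\Omega)>\eta\}$, whose boundary is Lipschitz for a.e.\ $\eta$ by coarea, and integrate by parts to reduce to a boundary integral on $\partial\Omega_\eta$ plus a correction on the layer $\Omega\setminus\Omega_\eta$. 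The box-counting assumption yields, for every $\delta>0$, the bounds $|\Omega\setminus\Omega_\eta|\lesssim\eta^{n-d-\delta}$ and, for a.e.\ $\eta$, $\mathcal H^{n-1}(\partial\Omega_\eta)\lesssim\eta^{n-1-d-\delta}$. For $v\in C^{0,\alpha}$ I would mollify component-wise, $v_\epsilon^i := v^i*\rho_\epsilon$, giving $|\det Dv_\epsilon|\lesssim\prod_i[v^i]\epsilon^{\sum_i\alpha_i-n}$ and $|(\mathrm{cof}\,Dv_\epsilon)_{ji}|\lesssim\prod_{k\ne j}[v^k]\epsilon^{\sum_{k\ne j}(\alpha_k-1)}$.

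For \eqref{e:Zuest} I would test against $\varphi\in L^\infty$ with $\|\varphi\|_\infty\le 1$ and take $\psi_j$ a primitive of $\varphi$ in the $y_j$-direction. The freedom to subtract any constant $c$ from $\psi(v_\epsilon)$ (since $\mathrm{cof}^T c$ is divergence-free, another consequence of Piola) lets one insert the missing factor $[v^j]$ via $\|\psi(v_\epsilon)-\psi(v(x_0))\|_\infty\lesssim[v^j]\,\mathrm{diam}(\Omega)^{\alpha_j}$, so that all indices $k\in\{1,\ldots,n\}$ appear in the final bound. With $\eta\sim\epsilon$ both the boundary integral and the layer correction are $O(\epsilon^{\sum_i\alpha_i-d-\delta})$, which has positive exponent under $\sum_i\alpha_i>d$, and letting $\epsilon\to 0$ yields \eqref{e:Zuest}. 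For \eqref{e:Olbermannestimate}, since only $[\psi]_\gamma$ is controlled, I would cover $\partial\Omega_\eta$ by $\lesssim\eta^{-d-\delta}$ balls of radius $\eta$ and on each ball $B_i$ subtract the value $\psi(v_\epsilon(x_i))$ at its centre, again legitimately by $\mathrm{div}(\mathrm{cof}^Tc)=0$; this replaces the global sup norm of $\psi$ by the local oscillation $\lesssim[\psi]_\gamma[v]^\gamma\eta^{\alpha\gamma}$, and summing over the covering produces a boundary contribution $\lesssim[v]^{n-1+\gamma}[\psi]_\gamma\eta^{(n-1+\gamma)\alpha-d-\delta}$. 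The layer correction for the second estimate I would handle through an auxiliary mollification $\psi_\tau$ at scale $\tau$, balanced against $\eta\sim\epsilon$, so that the pointwise loss $\|D\psi_\tau\|_\infty\lesssim[\psi]_\gamma\tau^{\gamma-1}$ is absorbed by the boundary gain.

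The main obstacle I anticipate is the simultaneous balancing of the three scales (mollification $\epsilon$ for $v$, tubular width $\eta$, and auxiliary mollification $\tau$ of $\psi$) together with the constant-subtraction trick on each covering ball, so that the final exponent matches exactly $\sum_i\alpha_i-d$ in the first estimate and $(n-1+\gamma)\alpha-d$ in the second, rather than a strictly smaller one missing a factor $[v^j]$; this requires a careful local exploitation of the divergence-free structure of the cofactor matrix.
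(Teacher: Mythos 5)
The central difficulty is your treatment of the $L^1$ estimate \eqref{e:Zuest}, which you try to run entirely through the Piola/cofactor/Gauss--Green duality framework. To get $\|\mbox{\rm deg}\|_{L^1}$ by duality you must test against an arbitrary $\varphi\in L^\infty$ with $\|\varphi\|_\infty\leq 1$, and your scheme requires representing $\varphi=\mbox{\rm div}\,\psi$ with a $\psi$ whose composition with $v$ has oscillation controlled by a full H\"older factor $[v^j]$. Your choice of $\psi^j$ as a $y_j$-primitive of $\varphi$ is Lipschitz only in the $y_j$-direction: it is merely $L^\infty$, and in general discontinuous, in the remaining variables. Thus $\psi^j(v(x))-\psi^j(v(x_0))$ is not controlled by $[v^j]\,|x-x_0|^{\alpha_j}$ --- the term coming from the transverse variables carries an uncontrolled modulus of continuity of $\varphi$ --- so the constant-subtraction trick does not restore the missing factor. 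This is not a repairable defect of your particular primitive: no choice of $\psi$ with $\mbox{\rm div}\,\psi=\varphi$ and $[\psi]_{C^{0,1}}\lesssim\|\varphi\|_\infty$ exists for general $\varphi\in L^\infty$ (the Calder\'on--Zygmund endpoint fails), so the boundary-integral route intrinsically loses a full power of $[v^j]$ and cannot reach the product $\prod_i[v^i]$ in \eqref{e:Zuest}.

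The paper avoids this entirely for \eqref{e:Zuest}: it never invokes duality or the cofactor structure there. Instead it builds a Whitney-type smooth extension $\tilde v$ of $v$ on $\Omega$, mollified at dyadic scales comparable to $\mbox{\rm dist}(\cdot,\partial\Omega)$, obtaining the pointwise gradient bound $|\nabla\tilde v^i(x)|\lesssim[v^i]_{C^{0,\alpha_i}}\mbox{\rm dist}(x,\partial\Omega)^{\alpha_i-1}$. Since $|\mbox{\rm deg}(v,\Omega,y)|\leq N(y)$ (the number of $\tilde v$-preimages) off the null set $v(\partial\Omega)$, the area formula gives directly $\int N=\int_\Omega|\det D\tilde v|\lesssim\prod_i[v^i]\int_\Omega\mbox{\rm dist}(\cdot,\partial\Omega)^{\sum\alpha_i-n}$, and Lemma \ref{l:integrabilityofdist} closes the estimate. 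This also sidesteps a second, more technical issue in your sketch: a single global mollification scale $\epsilon$ produces a map defined only on $\Omega_\epsilon$, not on $\Omega$, so the determinant bound $\epsilon^{\sum\alpha_i-n}$ becomes uniformly useless and the limit $\mbox{\rm deg}(v_\epsilon,\cdot)\to\mbox{\rm deg}(v,\cdot)$ needs care; the dyadic extension handles this cleanly.

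Your covering-and-oscillation idea for \eqref{e:Olbermannestimate} is closer in spirit to what is needed, since there one really does exploit the divergence structure and a H\"older (not Lipschitz) bound on $\psi$. But the paper again takes a shorter route: it defines $\tilde V_j$ by replacing the $j$-th component of $\tilde v$ with $\psi^j\circ\tilde v$, uses the algebraic identity $\sum_j\det D\tilde V_j=(\mbox{\rm div}\,\psi)\circ\tilde v\,\det D\tilde v$, passes through smooth exhausting domains $\Omega_k$, and then applies \eqref{e:Zuest} to each $V_j$, noting $[V_j^j]_{C^{0,\alpha\gamma}}\leq[\psi]_{C^{0,\gamma}}[v]_{C^{0,\alpha}}^\gamma$. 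This bypasses the coarea/Gauss--Green boundary slicing, the choice of good level $\eta$, and the three-scale balancing $(\epsilon,\eta,\tau)$ that you flag as a concern. Your proposal could plausibly be pushed through for the second estimate, but it is substantially heavier than the paper's argument, and it cannot, as written, produce the first estimate on which the paper's treatment of the second one rests.
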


\subsection{Two technical lemmas} We record here two simple facts related to the dimension of $\partial \Omega$. 

% \\ Indeed, for $\varepsilon $ small enough we have $v_\varepsilon(\Omega) \subset (v(\Omega))_1$, so 
%  \[ \int_{\R^{n}} |\text{deg}(v_\varepsilon,\Omega,y)\text{div}\psi(y)|\,dy \leq \|\text{deg}(v_\varepsilon,\Omega,\cdot)\|_{L^{p}\left( \R^{n} \right)} \|\text{div}\psi\|_{L^{p'}\left( \left( v\left( \Omega \right) \right)_1 \right)} \leq C (n,\alpha,\|v\|_{\infty})\|v\|_{C^{0,\alpha}\left( \Omega \right)}^{\frac{n}{p}}\,.\]
%  Therefore it follows by dominated convergence
%  \begin{align*}
%    |\int_{\R^{n}} \text{deg}(v,\Omega,y)\text{div}\psi(y)\,dy|=\lim_{\varepsilon\to0 } \int_{\R^{n}} |\text{deg}(v_\varepsilon,\Omega,y)\text{div}\psi(y)|\,dy&\leq \lim_{\varepsilon\to 0} C(n,\alpha,\gamma,\Omega)\|v\|^{n-1+\gamma}\|\psi\|_{C^{0,\gamma}\left( B_{R+C\varepsilon} \right)} \\&= C(n,\alpha,\gamma,\Omega)\|v\|^{n-1+\gamma}\|\psi\|_{C^{0,\gamma}\left( B_{R}\right)}\,.
%  \end{align*}
\begin{lemma}\label{l:integrabilityofdist}
 Let $\Omega\subset \R^{n}$ be a bounded open set with $d:=\mbox{{\rm dim}}_{b}\, (\partial\Omega) <n$. Then for any $\varepsilon> 0$  the function $\mbox{{\rm dist}}\, (x,\partial\Omega)^{d+\varepsilon-n}$ is integrable.
\end{lemma}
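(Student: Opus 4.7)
The plan is to exploit the defining property of the box-counting dimension together with a dyadic decomposition of $\Omega$ by level sets of $x\mapsto\mathrm{dist}(x,\partial\Omega)$. Since $\Omega$ is bounded and the integrand is bounded away from $\partial\Omega$, it suffices to control the integral on a small tubular neighborhood of the boundary.

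First, I would fix an auxiliary $\varepsilon'\in(0,\varepsilon)$ (for concreteness $\varepsilon'=\varepsilon/2$) and invoke the definition of $\dim_b(\partial\Omega)=d$: for every sufficiently small $r>0$, one can cover $\partial\Omega$ with at most $C\,r^{-(d+\varepsilon')}$ balls of radius $r$. As a consequence, the $r$-neighborhood $U_r:=\{x\in\R^n:\mathrm{dist}(x,\partial\Omega)<r\}$ is contained in the union of these balls enlarged by a factor of two, so
\[
|U_r|\leq C\,r^{-(d+\varepsilon')}\cdot r^n = C\,r^{n-d-\varepsilon'}.
\]

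Next I would write, for $k$ large enough, the dyadic annular decomposition
\[
A_k:=\{x\in\Omega: 2^{-k-1}<\mathrm{dist}(x,\partial\Omega)\leq 2^{-k}\},
\]
so that $A_k\subset U_{2^{-k}}$ and hence $|A_k|\leq C\,2^{-k(n-d-\varepsilon')}$. On $A_k$ one has $\mathrm{dist}(x,\partial\Omega)^{d+\varepsilon-n}\leq C\,2^{k(n-d-\varepsilon)}$ (note that $d+\varepsilon-n<0$). Multiplying and summing,
\[
\int_{U_{2^{-k_0}}\cap\Omega}\mathrm{dist}(x,\partial\Omega)^{d+\varepsilon-n}\,dx
\leq C\sum_{k\geq k_0}2^{k(n-d-\varepsilon)}\,2^{-k(n-d-\varepsilon')}
= C\sum_{k\geq k_0}2^{-k(\varepsilon-\varepsilon')},
\]
which is finite since $\varepsilon-\varepsilon'>0$. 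Finally, on the complement $\Omega\setminus U_{2^{-k_0}}$ the integrand is bounded by $2^{k_0(n-d-\varepsilon)}$ and the set has finite measure because $\Omega$ is bounded; adding this contribution concludes the integrability.

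The only slightly delicate point is making sure the covering estimate is used correctly: one has to pick the auxiliary exponent $\varepsilon'$ strictly smaller than $\varepsilon$ so that the resulting geometric series converges, and this is the place where the strict inequality $\dim_b(\partial\Omega)<d+\varepsilon$ (built into the definition of box-counting dimension) is essential. No further ingredients are needed.
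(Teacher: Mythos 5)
Your proof is correct. It has the same underlying skeleton as the paper's argument -- a dyadic decomposition of $\Omega$ by scale of $\mathrm{dist}(\cdot,\partial\Omega)$ followed by summing a geometric series whose convergence is guaranteed by choosing an auxiliary exponent $\varepsilon'<\varepsilon$ -- but you obtain the crucial measure estimate for the dyadic shells in a different way. The paper decomposes $\Omega$ into Whitney cubes and cites a result (Theorem 3.12 in Martio--Vuorinen) giving the bound $\#\{Q\in W: \ell(Q)=2^{-k}\}\leq C\,2^{k(d+\varepsilon/2)}$ on the number of Whitney cubes of sidelength $2^{-k}$, from which the shell measure follows. You instead extract the shell estimate $|U_r|\leq C\,r^{n-d-\varepsilon'}$ directly from the covering definition of the (upper) box-counting dimension, via the elementary observation that the $r$-neighborhood of $\partial\Omega$ is contained in the doubled covering balls. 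Your route is more self-contained since it avoids invoking the Whitney decomposition and the external reference, at the cost of spelling out the covering argument; in return it makes transparent exactly where the definition of $\dim_b$ enters. One minor point worth stating explicitly: the argument as you wrote it tacitly assumes $d+\varepsilon<n$ (so that the exponent $d+\varepsilon-n$ is negative and the integrand blows up near the boundary); when $d+\varepsilon\geq n$ the integrand is bounded near $\partial\Omega$ and integrability is immediate since $\Omega$ is bounded, so no harm, but the reduction should be noted. The paper handles this by fixing $0<\varepsilon<n-d$ at the outset.
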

\begin{proof}
 Fix $0<\varepsilon<n-d$ and let $W$ be the Whitney decomposition of $\Omega$ and let $W_k:= \{ Q\in W: Q \text{ cube of sidelength } 2^{-k}\}$. Then 
 \begin{enumerate}
  \item dist($Q, \partial \Omega) \geq 2^{-k}\sqrt{n}$ for any $Q\in W_k$ and 
  \item there exists $C\equiv C(\varepsilon)>0 $ such that $\# W_k \leq C2^{k(d+\varepsilon/2)}$ for any $k\in \N$ (cf. Theorem 3.12 in \cite{MartioVuorinen}).
 \end{enumerate}
 
 Since $\mathring{Q} \cap  \mathring{Q'} = \emptyset$ for any $Q\neq Q'$ we have 
 \begin{align*}
 \int_{\Omega}\text{dist}(x,\partial \Omega)^{d+\varepsilon-n}\,dx &= \sum_{k\geq1}\sum_{Q\in W_k} \int_{Q}\text{dist}(x,\partial\Omega)^{d+\varepsilon-n}\,dx\leq C(n)\sum_{k\geq1}\sum_{Q\in W_k} \mathcal L^{n}\left( Q \right)2^{-k(d+\varepsilon-n)} \\
  &\leq C(n,\varepsilon)\sum_{k\geq1}2^{k(d+\varepsilon/2)}2^{-k(d+\varepsilon)} \leq C(n,\varepsilon)<+\infty\,.\qedhere
 \end{align*}
\end{proof}

\begin{lemma}\label{l:measure_zero}
If $v$ and $\Omega$ are as in Theorem \ref{l:Olbermann} then $v (\partial \Omega)$ is a Lebesgue-null set. 
\end{lemma}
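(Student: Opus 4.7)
The plan is to cover $\partial\Omega$ efficiently by small cubes and bound the Lebesgue measure of the image of each cube under $v$ using the differing Hölder exponents. The upshot is that $v(\partial\Omega)$ is Lebesgue-negligible as soon as $\sum_i \alpha_i > d$.

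More precisely, I would fix $\varepsilon > 0$ small enough that $d + \varepsilon < \sum_i \alpha_i$, which is possible by hypothesis. By definition of the upper box counting dimension, for every sufficiently large $k$ the boundary $\partial\Omega$ can be covered by at most $C_{\varepsilon}\, 2^{k(d+\varepsilon)}$ closed cubes of sidelength $2^{-k}$ (this is the same estimate already used in the proof of Lemma \ref{l:integrabilityofdist}, and it appears e.g.\ as Theorem~3.12 of the reference cited there). For each such cube $Q$ and each coordinate $i$, the Hölder bound gives
\begin{equation*}
\mbox{diam}\, (v^i(Q)) \leq [v^i]_{C^{0,\alpha_i}} (\mbox{diam}\,Q)^{\alpha_i} \leq C\, [v^i]_{C^{0,\alpha_i}}\, 2^{-k\alpha_i}\, ,
\end{equation*}
so $v(Q)$ is contained in an $n$-dimensional rectangular box whose $i$-th side has length at most $C\,[v^i]_{C^{0,\alpha_i}}\, 2^{-k\alpha_i}$. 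Consequently
\begin{equation*}
\mathcal{L}^n(v(Q)) \leq C \prod_{i=1}^n [v^i]_{C^{0,\alpha_i}}\, 2^{-k\sum_i \alpha_i}\, .
\end{equation*}

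Summing over the cubes in the cover yields
\begin{equation*}
\mathcal{L}^n(v(\partial\Omega)) \leq C_{\varepsilon} \prod_{i=1}^n [v^i]_{C^{0,\alpha_i}}\, 2^{k(d+\varepsilon - \sum_i \alpha_i)}\, ,
\end{equation*}
and since the exponent in the right hand side is strictly negative, letting $k\to\infty$ concludes the proof.

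There is really no obstacle: the only subtle point is making sure to exploit the anisotropic Hölder regularity (each coordinate gives the improvement $2^{-k\alpha_i}$ rather than the uniform $2^{-k\alpha}$ with $\alpha = \min_i \alpha_i$), which is exactly what makes the condition $\sum_i \alpha_i > d$ — rather than the stronger $n\alpha > d$ — sufficient.
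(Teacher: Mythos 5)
Your proof is correct and rests on the same core idea as the paper's: cover $\partial\Omega$ with small sets, push each set forward through $v$ into an anisotropic box with side lengths $\sim r^{\alpha_i}$ in the $i$-th direction, and conclude from $\sum_i\alpha_i>d$. The implementation differs in one respect: you work directly with the box-counting definition, taking for each large $k$ a uniform cover by $\lesssim 2^{k(d+\varepsilon)}$ dyadic cubes of the \emph{same} sidelength $2^{-k}$ and letting $k\to\infty$; the paper instead first observes that $\mbox{dim}_b(\partial\Omega)=d$ forces $\mathcal{H}^{d+\delta}(\partial\Omega)=0$ for $\delta>0$ and then uses Hausdorff-type covers by balls of \emph{varying} radii $r_i\leq 1$, controlling $\sum_i r_i^{\sum_j\alpha_j}\leq \sup_i r_i^{\sum_j\alpha_j-d-\delta}\cdot\sum_i r_i^{d+\delta}$. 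Both are elementary and of comparable length; your route avoids invoking Hausdorff measure and stays closest to the covering estimate already used in Lemma~\ref{l:integrabilityofdist}, while the paper's route makes explicit that only the Hausdorff (rather than box-counting) dimension is relevant for this particular lemma. Either is perfectly adequate.
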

\begin{proof}
Fix a positive $\delta \leq \sum_{i=1}^{n}\alpha_i -d$. For any $\varepsilon >0$ there is a covering of $\partial \Omega$ with balls $B_{r_i} (x_i)$ such that $\sum_i r_i^{d+\delta} \leq  (\mathcal{H}^{d+\delta} (\partial \Omega)) + \varepsilon = \varepsilon$ and $r_i \leq 1$, where $\mathcal{H}^\omega$ denotes the $\omega$-dimensional Hausdorff measure. 
Observe that $v (B_{r_i} (x_i))$ is contained in a box $Q_i =I^i_1\times \ldots \times I^i_n$, where each interval $I^i_j$ has length at most $(2 r_i)^{\alpha_j} [v^{j}]_{C^{0, \alpha_j}}$. Thus 
\[
|v (\partial \Omega)| \leq \sum_i |Q_i|
\leq C \prod_{j=1}^{n}[v^{j}]_{C^{0,\alpha_j}}\sum_i r_i^{\alpha_1 + \ldots + \alpha_n} \leq C(v) \varepsilon \sup_i r_i^{\alpha_1+\ldots + \alpha_n -d - \delta} \leq C \varepsilon\, .
\] 
Letting $\varepsilon\to 0$ we conclude the proof. 
\end{proof}

\subsection{Proof of Theorem \ref{l:Olbermann}} First of all recall that the degree depends only upon the values of $v$ at the boundary. We wish therefore to find a suitable extension $\tilde{v}$ of $v$ which is smooth in the interior and satisfies suitable estimates on the derivatives. For $k=0,1,\ldots$ set 
\begin{equation}\label{e:Ak}
A_k := \{ x\in  \Omega: \text{dist}(x,\partial \Omega) >2^{-k}\}\,
\end{equation}
and define $D_0 := A_1$, $D_k := A_{k+1}\setminus \bar A_{k-1}$ for $k=1,2,\ldots$. Fix a partition of unity $\{ \chi_k\}_{k\geq1}$ subordinate to the cover $\{D_k\}_{k\geq 0}$, i.e. 
\[ 0\leq \chi_k\leq 1,\quad \text{supp}\chi_k \subset D_k,\quad \sum_{k=0}^{+\infty} \chi_k = 1 \, \text{ on } \Omega\,.\]
Observe that each point $x\in \Omega$ has an open neighbourhood $U\subset\Omega$ on which at most three $\chi_k$ are non zero. Next fix a standard symmetric mollifier $\varphi$ with support contained in the ball of radius $1$ and define the functions $v_k:D_k\to \R^{n}$ by the convolution $\displaystyle v_k(x) := \varphi_{2^{-(k+1)}}\ast v (x)$. Finally, set 
\[ \tilde v := \sum_{k=0}^{+\infty} \chi_kv_k\,.\]
We have $\tilde v\in C^{\infty}\left( \Omega,\R^{n} \right)$ and we claim that for every $x\in \Omega$ 
\begin{align}
 |\nabla \tilde v^{i}(x) | &\leq C\text{dist}(x,\partial \Omega)^{\alpha_i-1}[v^{i}]_{C^{0,\alpha_i}\left( \Omega \right)}\quad \text{ for all } i \label{e:ineq1}\,.
\end{align}
By standard estimates 
\[ |\nabla v^{i}_k(y) | \leq C \left(2^{-(k+1)}\right)^{\alpha_i-1}[v^{i}]_{C^{0,\alpha_i}}\,,\quad \text{ whenever } y\in D_k\, .\]
Moreover, since $\sum \nabla \chi_k = 0$ and $| \nabla \chi_k |\leq C2^{k}$ we get 
\[ |\nabla \tilde v^{i}(x) | \leq \sum_{k=k_1}^{k_3}|\nabla \chi _k ||v_k^{i}(x) - v^{i}(x)| +C \left( 2^{-(k_3+1)} \right)^{\alpha_i-1}[v^{i}]_{C^{0,\alpha_i}}\leq C\left(2^{-(k_3+1)}\right)^{\alpha_i-1}[v^{i}]_{C^{0,\alpha_i}}\,.\]
Next, notice that $|\text{deg}\, (v, \Omega, y)| = |\text{deg} (\tilde{v}, \Omega, y)|$  is bounded by the number of preimages $N (y)$ in $\Omega$ through $\tilde{v}$ whenever $y\notin v (\partial \Omega)$. Since $v (\partial \Omega)$ is a null set, by the area formula, \eqref{e:ineq1} and Lemma \ref{l:integrabilityofdist} we have
\[
\int_{\R^n} N (y) dy = \int_\Omega |{\rm det} D\tilde v (x)|\, dx \leq C \prod_{i=1}^{n} [v^i]_{C^{0, \alpha_i}} \int_\Omega \text{dist}(x,\partial \Omega)^{\sum_i \alpha_i-n}\,dx
\leq C \prod_{i=1}^{n} [v^i]_{C^{0, \alpha_i}}\, .
\]
Next, fix a $C^1$ test field $\psi$ as in the second part of the statement and let $\alpha = \min_i \alpha_i$. Define the maps $\tilde V_{j} = (\tilde v^1,\ldots,\tilde v^{j-1}, \psi^j\circ \tilde v, \tilde v^{j+1},\ldots,\tilde v^n)$ 
and the corresponding $V_j = (v^1, \ldots, v^{j-1}, \psi^j \circ v, v^{j+1}, \ldots, v^n)$ for $j=1,\ldots, n$. 
In particular it follows $\sum_{j=1}^{n}\text{det}\,  D \tilde V_{j} = (\text{div}\, \psi) \circ \tilde v\, \text{det}\, D\tilde v$. 

Let $\Omega_k$ be smooth domains compactly contained in $\Omega$ so that\footnote{Let $A_k$ be the sets in \eqref{e:Ak} and ${\mathbf 1}_{A_k}$ their indicator functions, consider the mollifications $\eta_k := \mathbf{1}_{A_k} \ast \varphi_{2^{-k-1}}$
and set $\Omega_k = \{\eta_k > t_k\}$ for a suitably chosen $0< t_k<1$. The regularity of $\partial \Omega_k$ follows from Sard's Lemma.} $\Omega_k \uparrow \Omega$.
By the smoothness of $\tilde{v}$ and $\psi$, we can apply the area formula and conclude
\begin{align*}
\int_{\R^{n}}\text{deg}\, (\tilde v,\Omega_k,y)\, \text{div}\, \psi(y)\,dy & = \int_{\Omega_k}\text{div}\, \psi( \tilde v(x))\, \text{det}\, D\tilde v (x)\,dx = \sum_{j=1}^{n}\int_{\Omega_k}\, \text{det}\, D\tilde V_{j}(x)\,dx\\
& = \sum_{j=1}^n \int_{\R^n} \text{deg}\, (\tilde{V}_j, \Omega_k, y)\, dy\, .
\end{align*}
Next, observe that the number $N (y)$ bounds $|\text{deg}\, (\tilde v,\Omega_k,y)|$ for every $y$ and $k$ and thus, by the dominated convergence theorem,
\[
\lim_{k\to\infty} \int_{\R^{n}}\text{deg}\, (\tilde v,\Omega_k,y)\text{div}\, \psi(y)\,dy = \int_{\R^n} \text{deg}\, (\tilde v,\Omega,y)\text{div}\, \psi(y)\,dy\, .
\]
The same argument can be applied to $\tilde{V}_j$, since $|\det D\tilde{V}_j|\leq |D\psi| |D \tilde{v}|^n$ also belongs to $L^1 (\Omega)$. Hence, passing into the limit in $k$ and using the fact that $\tilde{v}$ agrees with $v$ on $\partial \Omega$ we can conclude
\[
\int_{\R^n} \text{deg}\, (v,\Omega,y)\text{div}\, \psi(y)\,dy = \sum_j \int \text{deg}\, (V_j,\Omega,y)\, dy\, .
\]
On the other hand for each $V_j$ we have $[V_j^i]_{C^{0, \alpha}} \leq [v]_{C^{0, \alpha}}$ when $i\neq j$ and 
$[V_j^j]_{C^{0, \alpha \gamma}} \leq [\psi]_{C^{0, \gamma}} [v]^\gamma_{C^{0, \alpha}}$.
Since by our choice of $\gamma$ we have $(n-1+\gamma) \alpha > d$, we can apply \eqref{e:Zuest} to conclude
\[
\|\text{deg}\, (V_j,\Omega,\cdot )\|_{L^1} \leq C (n, \Omega, \alpha, \gamma, d) [v]_{C^{0, \alpha}}^{n-1+\gamma} [\psi]_{C^{0, \gamma}}\, .
\]

\section{Proofs of Theorem \ref{t:main} and of Corollary \ref{c:convergence}}\label{s:main}

\subsection{Direct proof of Theorem \ref{t:main} for $\beta =0$}\label{s:easier} This section follows essentially Olbermann's argument and is only added for the reader's convenience in order to show that the harmonic analysis of the next section is only needed for $\beta>0$. The key is the following proposition.

\begin{proposition}\label{p:dual2}
Let  $\Omega\subset \R^{n}$, $n$, $d$, $\alpha$ and $v$ be as in Theorem \ref{t:main} with $\|v\|_{C^0}\leq 1$ and fix $1< p<\frac{n\alpha}{d}$. Then, if we denote by $p'$ the dual exponent of $p$, we have the estimate
\begin{equation}\label{e:dual2}
\left|\int \mbox{\rm deg}\,  (v, \Omega, y)\, \varphi (y)\, dy \right| \leq C (\Omega, n,d,\alpha, p. \beta)[ v]_{C^{0,\alpha}}^{\frac{n}{p}} \|\varphi\|_{L^{p'}} \qquad
\forall \varphi \in C^\infty_c (\mathbb R^n)\, .
\end{equation}
\end{proposition}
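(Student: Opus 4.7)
The natural approach is duality: rewrite the pairing of $\mathrm{deg}\,(v,\Omega,\cdot)$ against $\varphi$ as a pairing against a divergence, then invoke the estimate \eqref{e:Olbermannestimate} from Theorem \ref{l:Olbermann}. By density of $C^\infty_c$ in $L^{p'}$, I may assume $\varphi\in C^\infty_c(\R^n)$; and since $\mathrm{deg}\,(v,\Omega,\cdot)$ is supported in $\overline{v(\Omega)}\subset B_R$, I may further replace $\varphi$ by $\eta\varphi$ for a cutoff $\eta\equiv1$ on $B_R$ with compact support, which leaves the left-hand side of \eqref{e:dual2} unchanged and increases $\|\varphi\|_{L^{p'}}$ only by a universal factor.

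Given such a compactly supported $\varphi$, I set $\psi:=-\nabla(N*\varphi)$, where $N$ is the Newtonian potential on $\R^n$; then $\mathrm{div}\,\psi=-\Delta(N*\varphi)=\varphi$. The crucial choice is the Hölder exponent $\gamma:=1-n/p'\in(0,1)$, which is strictly positive because the hypothesis $p<n\alpha/d$ together with $\alpha\leq 1$ and $d\geq n-1$ forces $p<n/(n-1)$, hence $p'>n$. With this choice, Calderón--Zygmund theory yields $\|D^2(N*\varphi)\|_{L^{p'}(\R^n)}\leq C\|\varphi\|_{L^{p'}}$, and Morrey's embedding $W^{1,p'}(B_R)\hookrightarrow C^{0,1-n/p'}(B_R)$ gives
\begin{equation*}
[\psi]_{C^{0,\gamma}(B_R)}\leq C(R,p,n)\|\varphi\|_{L^{p'}}\,.
\end{equation*}
The very same choice ensures $(n-1+\gamma)\alpha=(n/p)\alpha>d$, which is exactly the admissibility condition required by \eqref{e:Olbermannestimate}, and also produces the matching power $n-1+\gamma=n/p$ on the $C^{0,\alpha}$ seminorm of $v$.

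Since the $\psi$ so constructed is only Hölder and not $C^1$, I approximate by mollification, setting $\psi_\varepsilon:=\psi*\rho_\varepsilon$. Then $[\psi_\varepsilon]_{C^{0,\gamma}}\leq[\psi]_{C^{0,\gamma}}$ uniformly in $\varepsilon$, and $\mathrm{div}\,\psi_\varepsilon=\varphi*\rho_\varepsilon\to\varphi$ uniformly (and in $L^{p'}$). Applying \eqref{e:Olbermannestimate} to $\psi_\varepsilon$, using $\mathrm{deg}\,(v,\Omega,\cdot)\in L^1$ (Theorem \ref{l:Olbermann}) to justify passing to the limit by dominated convergence, gives
\begin{equation*}
\left|\int\mathrm{deg}\,(v,\Omega,y)\,\varphi(y)\,dy\right|\leq C\,[v]_{C^{0,\alpha}}^{n-1+\gamma}[\psi]_{C^{0,\gamma}}\leq C\,[v]_{C^{0,\alpha}}^{n/p}\|\varphi\|_{L^{p'}}\,,
\end{equation*}
which is precisely \eqref{e:dual2}. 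The only nontrivial step is the exponent bookkeeping: one must verify that the single value $\gamma=1-n/p'$ simultaneously lives in $(0,1)$, satisfies the admissibility condition $(n-1+\gamma)\alpha>d$ of Theorem \ref{l:Olbermann}, and yields the claimed power $n/p$ on $[v]_{C^{0,\alpha}}$. Once this three-way matching is checked, the rest is a routine combination of Calderón--Zygmund, Morrey, and mollification.
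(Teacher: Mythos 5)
Your argument is correct and follows the same route as the paper: represent $\varphi$ as a divergence via the Newtonian potential, control the Hölder seminorm of $\psi=-\nabla(N*\varphi)$ by Calderón--Zygmund plus Morrey, check that $\gamma=1-n/p'$ gives exactly $n-1+\gamma=n/p$, and feed this into the estimate \eqref{e:Olbermannestimate}. The exponent bookkeeping is done correctly, including the observation that $p<n\alpha/d$ together with $\alpha\leq1$, $d\geq n-1$ forces $p'>n$.

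Two of your steps are superfluous, though harmless. First, the statement already assumes $\varphi\in C^\infty_c$, so the cutoff $\eta\varphi$ and the density appeal are not needed. Second, and more to the point: you write that ``$\psi$ so constructed is only H\"older and not $C^1$,'' but this is not so. Since $\varphi\in C^\infty_c$, the Newtonian potential $N*\varphi$ is $C^\infty$, hence $\psi=-\nabla(N*\varphi)$ is $C^\infty$ and in particular $C^1$, so \eqref{e:Olbermannestimate} applies directly without any mollification. You conflated the regularity that the Calder\'on--Zygmund/Morrey estimate \emph{quantifies} (a $C^{0,\gamma}$ seminorm bound) with the actual regularity of $\psi$ (which is smooth). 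The mollification and the dominated-convergence passage to the limit can therefore be deleted; this is exactly what the paper does.
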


The case $\beta=0$ of Theorem \ref{t:main} then follows easily when $\|v\|_0 \leq 1$: just take the supremum over $\varphi\in C^\infty_c \cap\{ \|\varphi\|_{L^{p'}} \leq 1\}$ in \eqref{e:dual2} and
use the density of $C^\infty_c$ in $L^{p'}$ together with the usual duality $(L^p)^* = L^{p'}$.
To remove the assumption that $\|v\|_0 \leq 1$ it suffices, for a general nonzero $v$, to consider the normalization $v/\|v\|_0$ and compare its degree to that of $v$ with an obvious scaling argument (cf. Section \ref{s:Bessel} below where this argument is repeated with more details). The extension to $p=1$ follows because $\text{deg} (v,\Omega,  \cdot)$ is supported in the bounded set $v (\Omega)$, whose diameter can be estimated using the H\"older norm of the function $v$. 
We are thus left to show \eqref{e:dual2}. Fix $\varphi$ and consider the potential theoretic solution $\zeta$ of 
\[
- \Delta \zeta = \varphi\, .
\]
By classical Calderon-Zygmund estimates we have $\|\zeta\|_{W^{2,p'} (B_2)} \leq C \|\varphi\|_{L^{p'}}$. So, if we
set $\psi = - \nabla \zeta$, we conclude ${\rm div}\, \psi = \varphi$ on $B_2$ and, from the Sobolev embedding, 
$[\psi]_{C^{0, \gamma} (B_2)} \leq C\|v\|_{L^{p'}}$, where $\gamma = 1- \frac{n}{p'} = 1 - n + \frac{n}{p} > 1 - n + \frac{d}{\alpha}$. Since $\text{deg} (v, \Omega, \cdot)$ is supported in $B_2$, we can apply Theorem \ref{l:Olbermann} to conclude \eqref{e:dual2}. 

\subsection{Bessel potential spaces when $\beta>0$}\label{s:Bessel}
Rather than showing estimate \eqref{e:degree-estimate-1} we will show, for the exponents in the ranges $1< p<\frac{n\alpha}{d}$ and $0\leq \beta <\frac{n}{p}-\frac{d}{\alpha}$, the slightly different estimate
 \begin{equation}\label{e:degree-estimate}
  \| \text{deg}(v,\Omega,\cdot)\|_{\mathcal H^{\beta,p}} \leq C \| v\|_{C^{0,\alpha}}^{\frac{n}{p}-\beta} \qquad \mbox{when $\|v\|_{C^{0}} \leq  1$,}
 \end{equation}
where $\mathcal H^{\beta,p}\left( \R^{n} \right)$ is the Bessel potential space (see below for the relevant definition). Recall (see e.g. the classical textbook of Triebel \cite{Triebel}) that the spaces $W^{\beta, p}$ and $\mathcal{H}^{\beta, p}$ correspond, respectively, to the Triebel-Lizorkin spaces
$F^{p,p}_\beta$ and $F^{p,2}_\beta$. Since we have the continuous embedding $F^{p,q}_\beta \subset F^{p, q'}_{\beta- \varepsilon}$ for every $q,q'$ and every $\varepsilon >0$, we get as a corollary of \eqref{e:degree-estimate} the estimate 
\begin{equation}\label{e:degree-estimate-2}
 \|\text{deg}(v,\Omega,\cdot)\|_{W^{\beta,p}} \leq C\|v\|_{C^{0,\alpha}}^{\frac{n}{p}-\beta} \qquad \mbox{when $\|v\|_0 \leq 1$.}
\end{equation}

From \eqref{e:degree-estimate-2} it follows by scaling that for any nonzero $v$ as in Theorem \ref{t:main} we have
\begin{equation}
 [\text{deg}(v,\Omega,\cdot)]_{W^{\beta,p}} = \|v\|_{C^{0}}^{\frac{n}{p}-\beta }\left[\text{deg}\left(\frac{v}{\|v\|_{C^{0}}},\Omega,\cdot\right)\right]_{W^{\beta,p}} \leq C \|v\|_{C^{0,\alpha}}^{\frac{n}{p}-\beta}\,. 
\end{equation}
Apply the latter estimate to $\tilde v:= v-v(x_0)$ for some $x_0\in \Omega$. Since $\text{deg}\, (\tilde v, \Omega, y) = \text{deg}\, (v, \Omega, y +v (x_0))$ and $\|\tilde v\|_0 \leq C (\Omega, \alpha) [v]_{C^{0, \alpha}}$ we recover \eqref{e:degree-estimate-1}.

Recall that the Bessel potential of degree $\beta > 0$ is the $L^1$ function $J_\beta$ such that $\hat J_\beta(\xi) = \left( 1+4\pi^{2}|\xi|^{2} \right)^{-\beta/2}$ (where $\hat{h}$ denotes the Fourier transform of $h$). The convolution with $J_\beta$ defines a continuous linear map
$\mathcal{J}_\beta: L^p \to L^p$ and can be regarded as the pseudodifferential operator $(\text{Id} -\Delta)^{-\beta/2}$. In particular
\begin{equation}\label{e:inverse}
(\text{Id} -\Delta)\mathcal J_2\varphi = \varphi\qquad \forall \varphi\in C^\infty_c (\R^{n})\, .
\end{equation}
Concerning the Bessel potential space $\mathcal{H}^{\beta, p}$ we will need the following facts (cf. again \cite{Triebel}):
\begin{itemize}
\item[(F1)] $f\in \mathcal{H}^{\beta, p}$ if and only if there is $g\in L^p$ with $f = \mathcal{J}_\beta (g)$; such $g$ is unique and $\|f\|_{\mathcal H^{\beta,p}} = \|g\|_{L^{p}}$;
\item[(F2)] $(\mathcal{H}^{\beta, p}, \|\cdot\|_{\mathcal{H}^{\beta, p}})$ is a separable reflexive Banach space for any $p\in ]1, \infty[$ and $C^\infty_c (\mathbb R^n)$ is dense in it; 
\item[(F3)]  if $\beta p> n$ and $p\geq 2$ we have the continuous inclusion $\mathcal{H}^{\beta, p} \subset W^{\beta, p}$ and hence, by Morrey's embedding, $\mathcal H^{\beta,p} \subset C^{0,\gamma}$ with $\gamma = (\beta p - n)/p$. 
\end{itemize}
The idea of the proof of Theorem \ref{t:main} is to show that $\text{deg}(v,\Omega,\cdot)$ is an element of the dual of $(\mathcal{H}^{\beta, p})^*$ and to use the reflexivity property in (F2). As usual, $(\mathcal{H}^{\beta, p})^*$ denotes the Banach space of bounded linear functionals $L: \mathcal{H}^{\beta, p} \to \mathbb R$ endowed with the dual norm $\|\cdot\|_{(\mathcal{H}^{\beta,p})^*}$.
Moreover, since $C^\infty_c (\mathbb R^n)$ is dense in $\mathcal{H}^{\beta, p}$, we clearly have
\begin{equation}\label{e:dualnorm}
 \|L\|_{(\mathcal{H}^{\beta,p})^*} := \sup \left\{L (u) :  u\in C^\infty_c (\R^n) \;\mbox{and}\; \|u\|_{\mathcal H^{\beta,p}} \leq 1\right\} \,.
\end{equation} 
Of course $\left(\mathcal{H}^{\beta, p}\right)^{*}$ is a subspace of the space of tempered distributions and we can consider $C^\infty_c$ as a subset
of $\left(\mathcal{H}^{\beta, p}\right)^{*}$ via the identification of any element $\varphi\in C^\infty_c$ with the linear functional $u\mapsto \int \varphi u$. We then have the following standard consequence of distribution theory

\begin{lemma}\label{l:density}
$C^\infty_c$ is strongly dense in $(\mathcal{H}^{\beta, p})^*$ if $p\in ]1, \infty[$.
\end{lemma}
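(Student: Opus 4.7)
The plan is to prove Lemma \ref{l:density} by a Hahn-Banach argument that exploits the reflexivity of $\mathcal H^{\beta,p}$ from (F2), together with the standard fact that a locally integrable function annihilated by all test functions vanishes a.e.

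A preliminary check is that the stated embedding $\varphi\mapsto L_\varphi$, with $L_\varphi(u):=\int \varphi u\,dx$, really does land in $(\mathcal H^{\beta,p})^*$. This reduces to the continuous inclusion $\mathcal H^{\beta,p}\subset L^p$, which is immediate from $J_\beta\in L^1$ (with $\|J_\beta\|_{L^1}=\hat J_\beta(0)=1$) and Young's inequality: writing $u=\mathcal J_\beta g$ with $g\in L^p$ gives $\|u\|_{L^p}\leq \|J_\beta\|_{L^1}\|g\|_{L^p}= \|u\|_{\mathcal H^{\beta,p}}$, so $|L_\varphi(u)|\leq \|\varphi\|_{L^{p'}}\|u\|_{\mathcal H^{\beta,p}}$ for every $\varphi\in C^\infty_c$.

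Then I would argue by contradiction. If $C^\infty_c$ were not dense, its closure would be a proper closed subspace of $(\mathcal H^{\beta,p})^*$, and Hahn-Banach would produce a nonzero $\Phi \in \bigl((\mathcal H^{\beta,p})^*\bigr)^{*}$ vanishing on $C^\infty_c$. Since $p\in\,]1,\infty[$, fact (F2) asserts that $\mathcal H^{\beta,p}$ is reflexive, so the canonical isomorphism $\bigl((\mathcal H^{\beta,p})^*\bigr)^{*}\cong \mathcal H^{\beta,p}$ lets me identify $\Phi$ with some nonzero $f\in \mathcal H^{\beta,p}$, acting on functionals by evaluation: $\Phi(L)=L(f)$.

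Applying this to the embedded test functions yields $0=\Phi(L_\varphi)=L_\varphi(f)=\int \varphi f\,dx$ for every $\varphi\in C^\infty_c$. But by the preliminary observation $f\in L^p\subset L^1_{\mathrm{loc}}$, so the fundamental lemma of the calculus of variations forces $f=0$ a.e., contradicting $f\neq 0$. The only genuine checks in this scheme are the continuous inclusion $\mathcal H^{\beta,p}\subset L^p$ and the identification of the double-dual pairing with pointwise evaluation; both are automatic from the isometry $\mathcal J_\beta:L^p\to \mathcal H^{\beta,p}$ in (F1) and the reflexivity in (F2), so no serious obstacle remains.
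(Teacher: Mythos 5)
Your proof is correct and follows essentially the same route as the paper's: Hahn--Banach applied to the closure of $C^\infty_c$, reflexivity to identify the annihilating functional with some $f\in\mathcal H^{\beta,p}\subset L^p$, and the fundamental lemma of the calculus of variations to force $f=0$. The only addition is the preliminary verification (via Young's inequality) that $\mathcal H^{\beta,p}\hookrightarrow L^p$, which the paper uses implicitly.
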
 
\begin{proof}
Let $H$ be the closure of $C^\infty_c$ in the norm $\|\cdot\|_{\left( \mathcal H^{\beta,p} \right)^{*}}$. If $H$ were a strict subset of $\mathcal{H}^{\beta, p}$, then 
by Hahn-Banach there would be a nontrivial linear functional $L': (\mathcal{H}^{\beta, p})^* \to \mathbb R$ wich vanishes on $H$. 
By reflexivity $L'$ is given by an element $u\in \mathcal{H}^{\beta, p}$, which must therefore be nonzero. Since however $L'$ vanishes on 
$H$, we conclude
\[
\int u \varphi = 0 \qquad \forall \varphi \in C^\infty_c\, .
\]
Since $u\in L^p$, the latter implies that $u\equiv 0$, which is a contradiction. 
\end{proof}

\eqref{e:degree-estimate} is then a consequence of the following natural generalization of  Proposition \ref{p:dual2}.

\begin{proposition}\label{p:dual}
Let  $\Omega\subset \R^{n}$, $n$, $d$, $\alpha$ and $v$ be as in Theorem \ref{t:main} with the additional assumption $\|v\|_0 \leq 1$ and fix $1< p<\frac{n\alpha}{d}$ and $0< \beta <\frac{n}{p}-\frac{d}{\alpha}$. Then we have the estimate
\begin{equation}\label{e:dual}
\left|\int_{\R^{n}} \mbox{{\rm deg}}\, (v, \Omega, y)\, \varphi (y)\, dy \right| \leq C (\Omega, n,d,\alpha, p. \beta)[ v]_{C^{0,\alpha}}^{\frac{n}{p}-\beta} \|\varphi\|_{(\mathcal{H}^{\beta, p})^*} \qquad
\forall \varphi \in C^\infty_c (\mathbb R^n)\, .
\end{equation}
\end{proposition}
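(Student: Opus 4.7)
The plan is to extend Proposition \ref{p:dual2} from the couple $L^{p'}$--$C^{0,\gamma}$ to the fractional couple $\mathcal{H}^{-\beta,p'}$--$C^{0,\gamma}$. Given $\varphi\in C^\infty_c(\R^n)$, I would take the Newtonian potential $\zeta:=G\ast\varphi\in C^\infty(\R^n)$ and set $\psi:=-\nabla\zeta\in C^\infty(\R^n,\R^n)$, so that $-\Delta\zeta=\varphi$ and $\mathrm{div}\,\psi=\varphi$ classically. The H\"older exponent tailored to the problem is
\[
\gamma:=1-\beta-\tfrac{n}{p'}=\tfrac{n}{p}-\beta-(n-1),
\]
and a direct computation gives $n-1+\gamma=\tfrac{n}{p}-\beta$, so that Olbermann's admissibility condition $(n-1+\gamma)\alpha>d$ in \eqref{e:Olbermannestimate} is exactly the hypothesis $\beta<\tfrac{n}{p}-\tfrac{d}{\alpha}$; moreover the standing assumptions $d\geq n-1$ and $\alpha\leq 1$, together with $\beta<\tfrac{n}{p}-\tfrac{d}{\alpha}$, force $\gamma\in(0,1)$.

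The crux is then the fractional regularity estimate
\[
[\psi]_{C^{0,\gamma}(B_R)}\leq C\|\varphi\|_{(\mathcal{H}^{\beta,p})^*}
\]
for some $R$ with $\overline{v(\Omega)}\subset B_R$; since $\|v\|_0\leq 1$, we may take $R=2$. Using the standard duality $(\mathcal{H}^{\beta,p})^*\cong\mathcal{H}^{-\beta,p'}$, equivalently $\|\varphi\|_{(\mathcal{H}^{\beta,p})^*}\sim\|\mathcal{J}_\beta\varphi\|_{L^{p'}}$, this is the fractional analogue of the $W^{2,p'}$-Calder\'on--Zygmund-plus-Morrey chain employed in \S\ref{s:easier}. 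I would first cut $\varphi$ off to a slightly larger ball than $B_R$ (which does not alter $\int\mathrm{deg}(v,\Omega,\cdot)\,\varphi$ and multiplies $\|\varphi\|_{(\mathcal{H}^{\beta,p})^*}$ only by a constant, since smooth compactly supported functions are pointwise multipliers on $\mathcal{H}^{\beta,p}$) and then chain two ingredients: (i) fractional interior elliptic regularity for $-\Delta$ in Bessel potential spaces,
\[
\|\zeta\|_{\mathcal{H}^{2-\beta,p'}(B_4)}\leq C\bigl(\|\varphi\|_{\mathcal{H}^{-\beta,p'}(\R^n)}+\|\zeta\|_{L^{p'}(B_8)}\bigr),
\]
whose lower-order term, for the now compactly supported $\varphi$, is controlled via Young's inequality applied to $\zeta=G\ast\varphi$; and (ii) the Sobolev--Morrey embedding $\mathcal{H}^{2-\beta,p'}(B_4)\hookrightarrow C^{1,\gamma}(B_2)$, applicable precisely because $1<2-\beta-\tfrac{n}{p'}<2$ by the arithmetic above.

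With this H\"older bound on $\psi=-\nabla\zeta$ in hand, plugging into \eqref{e:Olbermannestimate} yields
\[
\left|\int\mathrm{deg}(v,\Omega,y)\,\varphi(y)\,dy\right|=\left|\int\mathrm{deg}(v,\Omega,y)\,\mathrm{div}\,\psi(y)\,dy\right|\leq C[v]_{C^{0,\alpha}}^{n-1+\gamma}[\psi]_{C^{0,\gamma}(B_R)}\leq C[v]_{C^{0,\alpha}}^{n/p-\beta}\|\varphi\|_{(\mathcal{H}^{\beta,p})^*},
\]
which is \eqref{e:dual}. The main obstacle is step (i): the fractional Calder\'on--Zygmund estimate for the Laplacian in Bessel potential (equivalently Triebel--Lizorkin) spaces. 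For integer $\beta$ this reduces to the classical result used in \S\ref{s:easier}, but in the fractional regime it rests on the Littlewood--Paley characterisation of $\mathcal{H}^{s,q}$ and on the $L^{p'}$-boundedness of second-order Riesz transforms. Once this harmonic-analytic input is granted, every remaining piece -- the matching $n-1+\gamma=\tfrac{n}{p}-\beta$ that produces the correct exponent on $[v]_{C^{0,\alpha}}$, and the admissibility of $\gamma$ -- is exactly the arithmetic encoded in the range \eqref{e:range}.
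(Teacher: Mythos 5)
Your overall scheme is the right one and coincides with the paper's at the level of strategy: represent $\varphi$ as a divergence, estimate the $C^{0,\gamma}$ seminorm of the vector field in terms of $\|\varphi\|_{(\mathcal{H}^{\beta,p})^*}$, and feed this into \eqref{e:Olbermannestimate}. Your arithmetic ($\gamma = 1-\beta-n/p'$, $n-1+\gamma = n/p-\beta$, admissibility of $\gamma$) is exactly the paper's. However, the route you take to the key estimate $[\psi]_{C^{0,\gamma}(B_2)}\leq C\|\varphi\|_{(\mathcal{H}^{\beta,p})^*}$ is genuinely different and, as written, has a gap.

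The paper does \emph{not} use the Newtonian potential nor any fractional Calder\'on--Zygmund theory. It sets $\zeta = \mathcal{J}_2\varphi$, so that $-\Delta\zeta+\zeta=\varphi$, and then observes that $\zeta = \mathcal{J}_{2-\beta}(\mathcal{J}_\beta\varphi)$; by the definition (F1) of the Bessel norm, $\|\zeta\|_{\mathcal{H}^{2-\beta,p'}} = \|\mathcal{J}_\beta\varphi\|_{L^{p'}}$, which is bounded by $\|\varphi\|_{(\mathcal{H}^{\beta,p})^*}$ via a one-line $L^p$--$L^{p'}$ duality. The leftover term $+\zeta$ is then absorbed by solving $-\Delta\bar\zeta = \zeta\eta$ with a cutoff $\eta$ and applying \emph{classical} Schauder estimates; $\psi = -\nabla(\bar\zeta+\zeta)$ gives $\operatorname{div}\psi = \varphi$ on $B_2$. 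Thus the only inputs are the semigroup identity $\mathcal{J}_{2-\beta}\circ\mathcal{J}_\beta = \mathcal{J}_2$, the Morrey embedding (F3), and Schauder theory for $-\Delta$ with H\"older data — nothing fractional.

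By contrast, your lower-order term control is not right as stated. You write $\|\zeta\|_{L^{p'}(B_8)} \leq C\|\varphi\|_{L^{p'}}$ by Young against $G\ast\varphi$, but $\|\varphi\|_{L^{p'}}$ is \emph{not} dominated by $\|\varphi\|_{(\mathcal{H}^{\beta,p})^*}\sim\|\varphi\|_{\mathcal{H}^{-\beta,p'}}$ — the inclusion $L^{p'}\hookrightarrow\mathcal{H}^{-\beta,p'}$ goes the other way. The fix is to gain the missing $\beta$ degrees of smoothness from the Green kernel itself (e.g.\ show that a suitably truncated $G\eta$ lies in $\mathcal{H}^{\beta,1}$, so that $G\eta\ast\varphi = (\mathcal{J}_{-\beta}(G\eta))\ast(\mathcal{J}_\beta\varphi)$ gives the $L^{p'}$ bound by Young against $\mathcal{J}_\beta\varphi$), or, more in the spirit of what you attempt, prove a genuinely fractional interior estimate for the full $\mathcal{H}^{2-\beta,p'}$ norm of $\zeta$ directly from $\|\varphi\|_{\mathcal{H}^{-\beta,p'}}$ using the $\mathcal{H}^{s,q}$-boundedness of Riesz transforms. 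Either is doable, but both are heavier than the paper's device of replacing $-\Delta$ by $\mathrm{Id}-\Delta$, which turns the core estimate into a tautology of the Bessel calculus. In short: your proposal is salvageable and the exponent bookkeeping is correct, but the lower-order term step is wrong as written and the overall route is more involved than necessary.
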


We will prove Proposition \ref{p:dual} in the next section. Assuming it, we now show \eqref{e:degree-estimate}. Consider
the linear functional $L'': C^\infty_c \to \mathbb R$ given by
\[
L'' (\varphi) := \int_{\R^{n}} \text{deg} (v, \Omega, y)\, \varphi (y)\, dy \, .
\]
By Lemma \ref{l:density} and \eqref{e:dual}, $L'$ extends to a unique bounded linear functional $\mathcal{L} : (\mathcal{H}^{\beta, p})^* \to \mathbb R$ and moreover
\[
\|\mathcal{L}\|_{(\mathcal{H}^{\beta,p})^{**}} \leq C \| v\|_{C^{0,\alpha}}^{\frac{n}{p}-\beta}\, .
\]
By reflexivity $\mathcal{L}$ is represented by an element $u\in \mathcal{H}^{\beta, p}$ such that
$\|u\|_{\mathcal{H}^{\beta, p}} = \|\mathcal{L}\|_{(\mathcal{H}^{\beta,p})^{**}}$. This means
\[
\int_{\R^{n}} u (y)\varphi (y)\, dy= L'' (\varphi) = \int_{\R^{n}} \text{deg} (v, \Omega, y)\, \varphi (y)\, dy
\]
for every $\varphi\in C^\infty_c$. Since however both $\text{deg} (v, \Omega, \cdot)$ and $u$ are $L^p$ functions, they must
coincide. Hence 
\[
\|\text{deg} (v, \Omega, \cdot)\|_{\mathcal{H}^{\beta, p}}= \|u\|_{\mathcal{H}^{\beta, p}} = \|\mathcal{L}\|_{(\mathcal{H}^{\beta,p})^{**}} \leq C \| v\|_{C^{0,\alpha}}^{\frac{n}{p}-\beta}\, .
\]

\subsection{Proof of Proposition \ref{p:dual}} In order to prove the estimate \eqref{e:dual}, we will invoke 
property \eqref{e:Olbermannestimate} after representing $\varphi$ as the divergence of a suitable vector field,
which is the purpose of the following lemma.

\begin{lemma}\label{l:goodmap}
Let $\varphi \in C^{\infty}_c\left (\R^{n}\right )$ and assume $1<p<\frac{n}{n-1}$ and $\beta \in ]0,1[$ with $(1-\beta)p'>n$ (where $p'$ is the dual exponent of $p$). Then there exists $\psi\in C^{\infty}\left (\R^{n},\R^{n}\right )$ such that 
\[\mbox{{\rm div}}\,  \psi = \varphi \quad \text { on } B_2\]
and, setting $\gamma = 1-\beta-n/p'$,
\[ \|\psi\|_{C^{0,\gamma}(B_2)} \leq C (n, \gamma, \beta, p) \| \varphi\|_{(\mathcal{H}^{\beta,p})^*}\, .\]
\end{lemma}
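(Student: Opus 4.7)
My plan is to build $\psi$ by first inverting the operator $\mathrm{Id}-\Delta$ applied to $\varphi$ and then absorbing the leftover zeroth-order piece via a classical right inverse of the divergence on the ball. The starting point is the standard duality $(\mathcal H^{\beta,p})^{*}=\mathcal H^{-\beta,p'}$, valid for $1<p<\infty$ (see \cite{Triebel}), which gives $\|\varphi\|_{(\mathcal H^{\beta,p})^{*}}=\|\varphi\|_{\mathcal H^{-\beta,p'}}$. Set $\zeta:=\mathcal J_{2}\varphi$. Because $\mathcal J_{2}$ is an isometric isomorphism $\mathcal H^{-\beta,p'}\to\mathcal H^{2-\beta,p'}$, one has $\|\zeta\|_{\mathcal H^{2-\beta,p'}}=\|\varphi\|_{(\mathcal H^{\beta,p})^{*}}$, and since $\varphi\in C^{\infty}_{c}$ also $\zeta\in C^{\infty}(\R^{n})$. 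From \eqref{e:inverse} we have the pointwise identity
\[
\varphi\,=\,(\mathrm{Id}-\Delta)\zeta\,=\,\zeta\,+\,\mathrm{div}(-\nabla\zeta)\,.
\]

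The hypothesis $(1-\beta)p'>n$ says exactly that $(2-\beta)-n/p'=1+\gamma$ with $\gamma\in(0,1)$. The Morrey-type embedding for Bessel potential spaces at a non-integer differentiability exponent, which is a standard consequence of the identification $\mathcal H^{s,p}=F^{p,2}_{s}$ and the embedding $F^{p,2}_{s}\hookrightarrow C^{k,s-n/p-k}$ for $s-n/p\in(k,k+1)$ (see \cite{Triebel}), then gives
\[
\mathcal H^{2-\beta,p'}\,\hookrightarrow\, C^{1,\gamma}(\R^{n})\,,
\]
whence $\|\zeta\|_{C^{0,\gamma}(\R^{n})}+\|\nabla\zeta\|_{C^{0,\gamma}(\R^{n})}\leq C\,\|\varphi\|_{(\mathcal H^{\beta,p})^{*}}$. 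Thus $-\nabla\zeta$ is already a smooth $C^{0,\gamma}$ vector field on $\R^{n}$ whose divergence equals $\varphi-\zeta$; the remaining task is to absorb the scalar $\zeta$ into a divergence on $B_{2}$.

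To do so I write $\zeta=c+(\zeta-c)$ with $c:=|B_{2}|^{-1}\int_{B_{2}}\zeta$, so that $\zeta-c$ has vanishing mean on $B_{2}$ and $|c|\leq\|\zeta\|_{L^{\infty}(B_{2})}$. Applying Bogovskii's operator on the smooth domain $B_{2}$ to $\zeta-c$ produces a smooth compactly supported field $w\in C^{0,\gamma}_{c}(B_{2},\R^{n})$ with $\mathrm{div}\,w=\zeta-c$ on $B_{2}$ and $\|w\|_{C^{0,\gamma}(B_{2})}\leq C\,\|\zeta\|_{C^{0,\gamma}(B_{2})}$. Set $\psi_{1}(x):=w(x)+\tfrac{c}{n}\,x$; then $\psi_{1}$ is smooth on $\R^{n}$, $\mathrm{div}\,\psi_{1}=\zeta$ on $B_{2}$, and $\|\psi_{1}\|_{C^{0,\gamma}(B_{2})}\leq C\,\|\varphi\|_{(\mathcal H^{\beta,p})^{*}}$. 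Finally I let $\psi:=\psi_{1}-\nabla\zeta$, multiplied by a cutoff outside $B_{3}$ if global compactness of support is desired (this does not alter values on $B_{2}$). By the algebraic decomposition,
\[
\mathrm{div}\,\psi\,=\,\zeta-\Delta\zeta\,=\,\varphi\quad\text{on }B_{2}\,,
\]
and the triangle inequality closes the Hölder bound.

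The main delicate step is the Morrey embedding at the non-integer index $2-\beta$: integer-order Sobolev embedding cannot be invoked directly, so one must appeal to the Triebel-Lizorkin characterisation of $\mathcal H^{s,p}$. Everything else — the identity \eqref{e:inverse}, Bogovskii's $C^{0,\gamma}$ right inverse of the divergence on a smooth ball, and the decomposition $\varphi=\zeta+\mathrm{div}(-\nabla\zeta)$ — is routine.
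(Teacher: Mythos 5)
Your argument is correct, and it coincides with the paper in its main step: both set $\zeta=\mathcal J_{2}\varphi$, establish $\|\zeta\|_{\mathcal H^{2-\beta,p'}}\leq\|\varphi\|_{(\mathcal H^{\beta,p})^{*}}$ (your duality-isometry computation and the paper's direct duality test with $g\in C^{\infty}_{c}$, $\|g\|_{p}\leq 1$, are literally the same estimate phrased differently), and then invoke the Bessel--Morrey embedding $\mathcal H^{2-\beta,p'}\hookrightarrow C^{1,\gamma}$ to control $\|\zeta\|_{C^{1,\gamma}(\R^{n})}$. After that your route diverges. The paper absorbs the zeroth-order term $\zeta$ by taking a cutoff $\eta$ ($\eta\equiv 1$ on $B_{2}$, $\mathrm{spt}\,\eta\subset B_{3}$), solving $-\Delta\bar\zeta=\zeta\eta$ via the Newtonian potential, and setting $\psi=-\nabla(\bar\zeta+\zeta)$; the needed bound $\|\nabla\bar\zeta\|_{C^{0,\gamma}(B_{2})}\leq C\|\zeta\|_{C^{0,\gamma}(B_{4})}$ is then a textbook Schauder estimate (GT Ch.\ 4). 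You instead subtract the mean, solve $\mathrm{div}\,w=\zeta-c$ on $B_{2}$ with Bogovskii's operator, add the linear field $\frac{c}{n}x$, and subtract $\nabla\zeta$. Both constructions are valid, and each produces a field whose divergence equals $\zeta-\Delta\zeta=\varphi$ on $B_{2}$. The trade-off: the paper's Newtonian-potential step rests on a fully classical reference (Schauder estimates for the Laplacian), whereas your step needs the Hölder-space boundedness of the Bogovskii operator. That boundedness is true and follows from the weakly singular (order $1-n$) nature of the kernel, but it is less standard a citation than GT; if you use this route you should either give the short kernel estimate explicitly or point to a reference that treats Bogovskii in $C^{0,\gamma}$, rather than labelling it ``routine.'' One more small remark: your claimed Hölder bound $\|w\|_{C^{0,\gamma}}\lesssim\|\zeta\|_{C^{0,\gamma}}$ is in fact weaker than what Bogovskii gives (one expects a gain to $C^{1,\gamma}$), so there is comfortable slack in your estimate.
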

\begin{proof}
First of all observe that the condition $1<p<\frac{n}{n-1}$ implies $p'> n$ so that the condition on $\beta$ makes sense. Set $\zeta = \mathcal J_2 \varphi $. Then $\zeta \in C^{\infty}\left (\R^{n}\right )$ satisfies 
\begin{equation}
\label{e:lemmagoodmap1}
-\Delta \zeta +\zeta = \varphi \quad \text { on } \R^{n}
\end{equation}
and we claim that
\begin{equation}
\label{e:lemmagoodmap2}
\|\zeta \|_{C^{1,\gamma}\left (\R^{n}\right )} \leq C\|\varphi\|_{(\mathcal{H}^{\beta,p})^*}\,.
\end{equation}
Indeed, set $f=\mathcal J_\beta \varphi \in L^{p'}\left (\R^{n}\right )$ with $\|f\|_{p'}\leq C\|\varphi\|_{p'}<+\infty$, and $\mathcal J_{2-\beta} f = \mathcal J_2 \varphi = \zeta$. Observe that for any $g\in C^{\infty}_c\left (\R^{n}\right )$ with $\|g\|_p\leq 1$ we have 
\[ 
\int_{\R^{n}} f g\,dx = \int_{\R^{n}} \varphi \mathcal J_\beta g \,dx \leq \|\varphi\|_{(\mathcal{H}^{\beta,p})^*} \|\mathcal J_\beta g\|_{\mathcal H^{\beta,p}} \leq \|\varphi\|_{(\mathcal{H}^{\beta,p})^*}\,. 
\]
Taking the supremum over such functions $g$ yields $\|\zeta\|_{\mathcal H^{2-\beta,p'}}\leq \|\varphi\|_{(\mathcal{H}^{\beta,p})^*}$. Claim \eqref{e:lemmagoodmap2} then follows by the continuous embedding (F3).

Now fix a cutoff function $\eta \in C^{\infty}_c\left (\R^{n}\right ) $ with $\eta \equiv 1 $ on $B_2$ and ${\rm spt}\, \eta \subset B_{3}$ and denote by $\bar \zeta$ the classical potential theoretic solution of $- \Delta \bar \zeta = \zeta \eta$. By classical estimates (cf. \cite[Chapter 4]{GT}) we get 
\begin{equation}\label{e:lemmagoodmap3}\|\nabla \bar{\zeta}\|_{\gamma,B_2} \leq C \|\zeta \eta\|_{\gamma,B_{4}} \leq C \|\zeta\|_{1+\gamma,\R^{n}} \leq C \|\varphi\|_{(\mathcal{H}^{\beta,p})^*}\,.\end{equation}
%Indeed, Lemma 4.4 gives us the estimates for the newtonian potential: If $u=\Gamma\ast f$ for $f\in C^{\alpha}_c\left (\R^{n}\right )$ with supp$f \subset B_R$ then 
%\[\|D^{2}u\|_{0,B_R}+R^{\alpha}[D^{2}u]_{\alpha,B_R} \leq C\left (\|f\|_{0,B_{2R}}+R^{\alpha}[f]_{\alpha,B_{2R}}\right )\] hence
%\[ \|D^{2}u\|_{\alpha,B_R} \leq CR^{\alpha}\|f\|_{\alpha,B_{2R}}\,.\]
%Moreover by Lemma 4.1 we get 
%\[ |D_iu(x)| = |\int \partial_i \Gamma(x-y)f(y)\,dy| \leq C\|f\|_{0,B_{2R}} \int_{B_{2R}}|x-y|^{1-n}\,dy= C\|f\|_{0,B_{2R}}\,.\]
%Moreover clearly $|u(x)|\leq \|f\|_0 \int_{B_{2R}}|x-y|^{2-n}\,dy = C\|f\|_{0,B_{2R}}$. Hence 
%\[ \|u\|_{C^{2,\alpha}\left (B_R\right )} \leq CR^{\alpha} \|f\|_{C^{\alpha}(B_{2R})}\,.\]
Finally we set $\psi := - \nabla( \bar\zeta + \zeta)$. 
%We would also have the freedom to define $\psi:= \nabla(\bar\zeta-\zeta) -\nabla(\bar\zeta-\zeta)(0)$. This new $\psi$ would have the same H\"older seminorm and since it's zero at zero we only need the seminorm in the estimate of Olbermann's lemma.
Then by \eqref{e:lemmagoodmap1} 
\[\text {div}\, \psi = \zeta -\Delta \zeta = \varphi \quad \text { on } B_2\,,\]
and by \eqref{e:lemmagoodmap2} and \eqref{e:lemmagoodmap3}
\[ \|\psi \|_{C^{0,\gamma}\left (B_2\right )} \leq C \|\varphi\|_{(\mathcal{H}^{\beta,p})^*}\,.\qedhere\]

\end{proof}
The proof of \eqref{e:dual} is now an immediate corollary of Theorem \ref{l:Olbermann} and Lemma \ref{l:goodmap}.

\subsection{Proof of Corollary \ref{c:convergence}} Note that:
\begin{itemize}
\item $\mbox{deg}\, (v_k, \Omega, \cdot)$ converges pointwise to $\mbox{deg}\, (v, \Omega, \cdot)$ on $\mathbb R^n \setminus v(\partial \Omega)$;
\item $v (\partial \Omega)$ is a Lebesgue null set;
\item For any pair $(\beta', p)$ as in \eqref{e:range} with $\beta'> \beta$ we have a uniform bound on $\|\mbox{deg}\, (v_k, \Omega, \cdot)\|_{W^{\beta', p}}$;
\item There is $R>0$ such that $\|v\|_0, \sup_k \|v_k\|_0 <R$ and thus the functions $\mbox{deg}\, (v_k, \Omega, \cdot)$ and $\mbox{deg}\, (v, \Omega, \cdot)$ all vanish  outside $B_R (0)$.
\end{itemize}
Thus the strong convergence claimed in Corollary \ref{c:convergence} follows from the compact embedding of $W^{\beta', p} (B_R (0))$ into $W^{\beta, p} (B_R (0))$.

\section{Proof of Theorem \ref{t:sharpmoredim}}
To prove Theorem \ref{t:sharpmoredim} we construct, for $p\in[1,\frac{n}{n-1}[$ and $\alpha < \frac{p(n-1)}{n}$, a map $v \in C^{0,\alpha}\left( B_1,\R^{n} \right)$ with deg$(v,B_1,\cdot)\notin L^{p}\left( \R^{n} \right)$ by explicitly defining it on the boundary $\partial B_1$.  Since the support of degree is bounded, clearly our map cannot belong to $L^{p^*}$ for any $p^*$ larger than such $p$. Any $C^{0,\alpha}$ extension of $v$ to the whole $\overline B_1$ then does the job, since the degree only depends on the values on the boundary of the domain.
The image  $v(\partial B_1)$ will be the union of countably many spheres $S_k$ with decreasing radii $r_k$. Each sphere $S_k$ will be circled a certain $c_k$ times in each direction. The goal is to choose the radii $r_k$ and the number of circlings $c_k$ in such a way that  $v$ is H\"older continuous with exponent $\alpha<\frac{p(n-1)}{n}$, but deg$(v,B_1,\cdot) \notin L^{p}\left( \R^{n} \right)$.\\
Given $p\in [1,\frac{n}{n-1}[$ we define a partition $\{I_k\}_{k\geq1}$ of the interval $[-\pi,\pi[$ as follows. \\For $k\geq 1$ define the numbers 

 \begin{equation}\label{e:setlength}
   |I_k| = c(n,p)k^{-\left( \frac{n-1}{n}+\frac{1}{p(n-1)} \right)}\,,
 \end{equation}
where the constant $c(n,p)$ is determined by the condition $\displaystyle \sum_{k\geq 1} |I_k| = 2\pi $. The sets $I_k$ are then defined by 
 \begin{align}\label{e:sets} 
 &I_1 = \left[\frac{-|I_1|}{2},\frac{|I_1|}{2}\right[\,,\quad \text{ and }\\
  I_k = \left[\frac{-\sum_{i=1}^{k}|I_i|}{2},\frac{-\sum_{i=1}^{k-1}|I_i|}{2}\right[ &\cup \left[\frac{\sum_{i=1}^{k-1}|I_i|}{2},\frac{\sum_{i=1}^{k}|I_i|}{2}\right[\,, \quad \text{ for } k\geq 2\,.
 \end{align}
Note that in this way the length of the set $I_k$ coincides with the number $|I_k|$. 

For brevity (and clarity) we introduce the following map $\Phi:[-\pi,\pi[\times[0,\pi]^{n-2} \to \R^{n}$ which is the usual (almost) parametrization of the sphere:
  \[ \Phi(\theta_1,\ldots,\theta_{n-1}) = (\cos\theta_1,\sin\theta_1\cos\theta_2,\ldots ,\sin\theta_1\cdot\ldots\cdot\sin\theta_{n-2}\cos\theta_{n-1},\sin\theta_1\cdot\ldots\cdot\sin\theta_{n-2}\sin\theta_{n-1})\,\]
The sets $I_k$ naturally give a decomposition of the sphere $\partial B_1$ into
 \[ J_k := \Phi (I_k \times [0,\pi]^{n-2})\,.\]
In the rest of the proof by a slight abuse of notation we identify $J_k$ with $I_k \times [0, \pi]^{n-2}$ and define $v$ over the latter domains: the map $\Phi$ is a parametrization on
$[-\pi, \pi[\times ]0, \pi[^{n-2}$, however $v$ will be constant on the set $[-\pi, \pi[ \times \partial ([0, \pi]^{n-2})$ and hence it will induce a well-defined map over the sphere. 

  For a given $\alpha< \frac{p(n-1)}{n}$ we then choose a number  
 \begin{equation}\label{e:q2}
  q > \frac{\alpha p(n-1)^{2}}{n(p(n-1)-\alpha n)}
 \end{equation}
 and define the radii 
 \begin{equation}\label{e:radii2}
   r_k = k^{-q}\quad \text{ for } k\geq 1\,.
 \end{equation}
 We then set the number of circlings to be 
 \begin{equation}\label{e:circlings2}
  c_k =  k^{\frac{qn-1}{p(n-1)}}\,,
 \end{equation}
which with an appropriate choice of $q$ in \eqref{e:q2} is a natural number for all $k$. 
For notational convenience we introduce the reparametrization
  \[ \Theta(\theta) = \begin{cases}
                                \frac{2\pi}{|I_1|}\theta+\pi \quad \text{ when } \theta \in I_1\\
                                \frac{4\pi(c_k+\frac{1}{2})}{|I_k|}\theta +\phi_k (\theta) \quad \text{ when }\theta\in I_k, k\geq 2\,,
                               \end{cases}\]
  where $\phi_k$ are phases defined by
\begin{equation}\label{e:phase} \phi_k(\theta) = \pi+ \pi(2c_k+1)\left(1-\text{sgn}(\theta)\frac{\sum_{i=1}^{k}|I_i|}{|I_k|}\right)\,, \end{equation}
which will ensure the continuity of the map. 

We then introduce the centerpoints of the spheres
  \[ x_k =\begin{cases}
           (r_1,0,\ldots,0) \quad \text{ for } k = 1\,,\\
           \left( r_k+2\sum_{i=1}^{k-1}r_i,0,\ldots,0 \right) \quad \text{ for } k\geq 2\,.
          \end{cases}\]
 Finally we define 
  \begin{equation}\label{e:themapv}
  v(\theta_1,\ldots,\theta_{n-1}) = x_k+r_k\Phi\left( \Theta(\theta_1),c_k\theta_2,\ldots,c_k\theta_{n-1} \right)\quad \text{ when } \theta_1\in I_k\,.\\
   \end{equation}
 The image $v(\partial B_1)$ decomposes into the union of countably many spheres $S_k= v(J_k)$ of radius $r_k$ and centers $x_k$. The intersection of any $S_k$ with $S_{k+1}$ only contains the northpole of $S_k$ (respectively the southpole of $S_{k+1}$), see Figure 1. 
 \begin{figure}[h]
  \centering
 \includegraphics[width=0.65\textwidth]{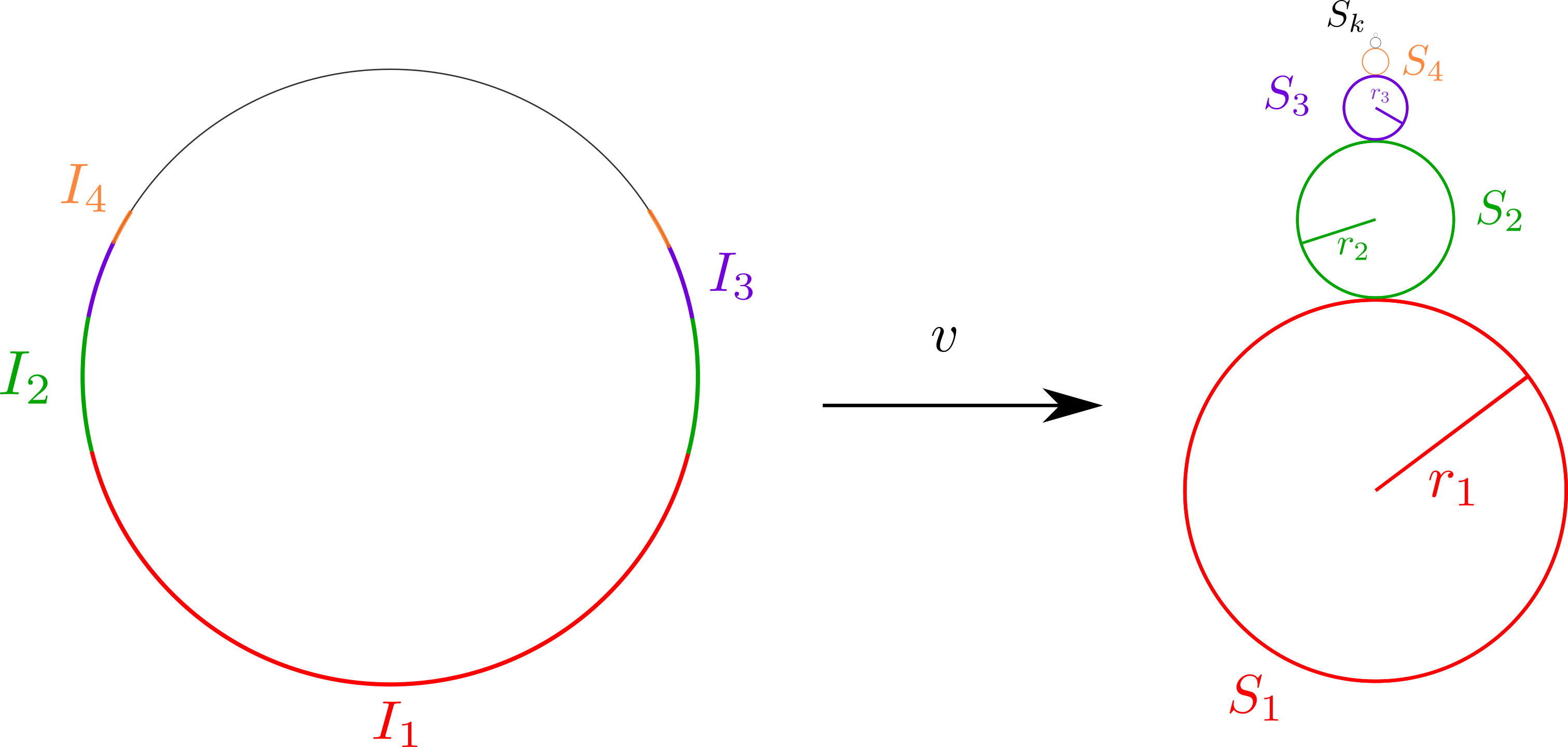}
 \captionof{figure}{The map $v$ for $n=2$: it goes around $S_1$ once and traverses every $S_k$ $2c_k+1$ times ($c_k+1/2$ times on each component of $I_k$)}
 \end{figure}

  We claim that  $v\in C^{0,\alpha}\left( \partial B_1,\R^{n} \right)$. First observe that the choice of $q$ in \eqref{e:q2} implies 
 \begin{equation}\label{e:hoeldercondition}
  r_k \leq \left( \frac{|I_k|}{c_k} \right)^{\alpha}\,.
 \end{equation}
 Indeed, this equation is equivalent to 
 \[ k^{-q}\leq k^{-\alpha\left(\frac{n-1}{n}+\frac{qn}{p(n-1)}\right)}\,,\]
 which is satisfied whenever 
 \[ q\left( 1-\frac{\alpha n}{p(n-1)} \right) > \frac{\alpha (n-1)}{n}\,,\]
 i.e.
 \[ q >\frac{\alpha p (n-1)^{2}}{n(p(n-1)-\alpha n)}\,.\]
But inequality \eqref{e:hoeldercondition} guarantees the desired H\"older regularity. To see this, we first fix the angles $\theta_2,\ldots,\theta_{n-1}$ and consider variations only in the first variable. To this end we let 
\[ u(\theta) = v(\theta,\theta_2,\ldots,\theta_{n-1}) \quad \text{ for } \theta \in [-\pi,\pi)\,,\]
fix $\theta,\tilde\theta\in [-\pi,\pi)$ and consider the following cases. 
 \begin{enumerate}
  \item $\theta, \tilde\theta \in I_k$ for some $k\geq 1$. If $|\theta-\tilde\theta| \geq \frac{|I_k|}{2(c_k+1/2)}=\frac{|I_k|}{2c_k+1}$, then
  \[ \frac{|u(\theta)-u(\tilde\theta)|}{|\theta-\tilde\theta|^{\alpha}}\leq 2r_k \left(\frac{|I_k|}{2c_k+1}\right)^{-\alpha} \leq C\,,\]
  by \eqref{e:hoeldercondition}. If however $|\theta-\tilde\theta|<\frac{|I_k|}{2c_k+1}$, then 
  \[ |u(\theta)-u(\tilde\theta)| \leq \frac{4\pi r_k(c_k+\frac{1}{2})}{|I_k|}|\theta-\tilde\theta|\leq \frac{4\pi r_k(c_k+\frac{1}{2})}{|I_k|}\left( \frac{|I_k|}{2c_k+1} \right)^{1-\alpha}|\theta-\tilde\theta|^{\alpha}\leq C|\theta-\tilde\theta|^{\alpha}\,.\]
  \item $\theta\in I_{k+1}, \tilde\theta\in I_k$ for some $k\geq1$. If $|\theta-\tilde\theta|\geq |I_k|$, then 
  \[\frac{|u(\theta)-u(\tilde\theta)|}{|\theta-\tilde\theta|^{\alpha}}\leq \frac{4r_k}{|I_k|^{\alpha}}\leq C\,.\]
  If however $|\theta-\tilde\theta|< |I_k|$ then they lie in adjacent intervals and we can compare with the endpoint $\theta_* = \frac{\text{sgn}(\theta)\sum_{i=1}^{k}|I_i|}{2}$ to get 
  \[\frac{|u(\theta)-u(\tilde\theta)| }{|\theta-\tilde\theta|^{\alpha}}\leq \frac{|u(\theta)-u(\theta_*)|}{|\theta-\theta_*|^{\alpha}}+\frac{|u(\tilde\theta)-u(\theta_*)|}{|\tilde\theta-\theta_*|^{\alpha}} \leq  C\,.\]
  \item $\theta \in  I_{k+j}, \tilde\theta\in I_k$ for some $k\geq 1$ and $j\geq 2$. Clearly $|\theta - \tilde\theta | \geq \frac{1}{2}\sum_{i=1}^{j-1}|I_{k+i}| $ so 
\begin{align*}
 \frac{|u(\theta)-u(\tilde\theta)|}{|\theta-\tilde\theta|^{\alpha}}&\leq 2^{\alpha}\frac{\sum_{i=k}^{k+j} 2r_i}{\left(\sum_{i=1}^{j-1}|I_{k+i}|\right)^{\alpha}}\leq 2^{1+\alpha}\left( \frac{r_k+r_{k+j}}{|I_{k+1}|^{\alpha}}+\sum_{i=1}^{j-1}\frac{r_i}{|I_{k+i}|^{\alpha}}\right) \\&\leq C\left( \frac{r_k}{|I_k|^{\alpha}}+\sum_{i=1}^{\infty}c_i^{-\alpha} \right)\leq C(p,\alpha)\,,
\end{align*}
 if $q$ is chosen large enough. 
%  Observe that  \[|I_{k+1}| = c(p)(k+1)^{-(1/p+1/2)}\geq 2^{-(1/p+1/2)}|I_k| \geq \frac{1}{2\sqrt(2)}|I_k|\,,\] since $p\geq 1$. 
 \end{enumerate}
The proof of the H\"older regularity is now complete in the case $n=2$. In the more general case some extra care is needed: a similar computation yields the H\"older regularity in the variable $\theta_i$ for every  $i=2,\ldots,n-1$ but one must take into account that the map $\Phi$ is not really a parametrization of the sphere. We leave the details to the reader.  
 
 To compute the degree we introduce the natural extension $\tilde v:[0,1]\times [-\pi,\pi[\times[0,\pi]^{n-2}\to \R^{n}$ with 
  \begin{equation}\label{e:extension}
   \tilde v(r,\theta_1,\ldots,\theta_{n-1}) = x_k+r\cdot r_k\Phi\left( \Theta(\theta_1),c_k\theta_2,\ldots,c_k\theta_{n-1} \right)\quad \text{ when } \theta_1\in I_k\,, \\
  \end{equation}
 Then $\tilde v \left( [0,1]\times J_k \right)$ is a ball $B^{k}$ with boundary $\partial B^{k} = S_k$. Fix a $y\in \text{Im} (\tilde v)\setminus \tilde v(\partial B_1)$. Then there exists a unique $k\in\N$ such that $y\in B^{k}$.  We can therefore parametrize  $y$ by 
  \[ y = x_k + r\cdot r_k\Phi(\phi_1,\ldots,\phi_{n-1}),\]	
  for some $r\in [0,1]$, $\phi_i \in [0,\pi]$ for $i=1,\ldots, n-2$ and $\phi_{n-1}\in [0,2\pi)$. 
  By definition the degree is then given by 
  \[ \text{deg}(\tilde v,B_1,y)  =\sum_{x\in \tilde v^{-1}(y)} \text{sgn det} D\tilde v(x)\,.\]
  By the chain rule and the usual expression for the spherical volume element we get for a point $x=(\tilde r,\theta_1,\ldots,\theta_{n-1})$ with $\tilde v(x) =y$
   \[ \text{det}§D\tilde v( x) = r_k^{n}(r\cdot c_k)^{n-1}\frac{4\pi (c_k+\frac{1}{2})}{|I_k|}\sin^{n-2}( \Theta_1(\theta_1))\sin^{n-3}\left( c_k\theta_2 \right)\cdot\ldots\cdot\sin\left(c_k\theta_{n-2} \right)\,,\]
  hence we have to investigate the sign of the sines. To this end we observe that $\tilde v(\tilde r,\theta_1,\ldots,\theta_{n-1}§) = y$ if and only if 
  \begin{equation}\label{e:angles}
   \begin{cases}
    \tilde r &=r \\
    \Theta(\theta_1) &= \phi_1 + 2\pi m_1 \, \text{ for } m_1\in \N \cap [\frac{1}{2}-\frac{\phi_1}{2\pi}, 2c_k+\frac{3}{2}-\frac{\phi_1}{2\pi}] \\
    c_k \theta_2 &= \phi_2 + 2\pi m_2 \, \text { for } m_2 = 1,\ldots, c_k\\
    \quad \vdots & \quad  \quad\quad  \vdots \\
    c_k\theta_{n-1} &= \phi_{n-1}+2\pi m_{n-1} \, \text{ for } m_{n-1} = 1,\ldots, c_k\,.
   \end{cases}
  \end{equation}
  Since for $i=1,\ldots, n-2$ the angles $\phi_i$ satisfy $0\leq \phi_i\leq \pi$ this implies that $\text{sgn det} Dv(x) = 1$ for any $x\in \tilde v^{-1}(y)$. Consequently, with the help of \eqref{e:angles} we conclude 
  \[ \text{deg}(\tilde v,B_1,y)  =\# v^{-1}(y) \geq  2c_k^{n-1}\,.\]
  From this in turn we deduce 
  \[ \int_{\R^{n}} |\text{deg}(\tilde v, B_1,y)|^{p}\,dy \geq C \sum_{k\geq 1} r_k^{n}c_k^{p(n-1)} = C\sum_{k\geq 1} k^{-1}=+\infty\,,\]
  by the choice of $r_k$ and $c_k$ in \eqref{e:radii2} and \eqref{e:circlings2} respectively. To conclude the proof we extend $v$ by keeping its $C^{0,\alpha }$ norm to the whole $\overline B_1$, and are left with a map $v\in C^{0,\alpha}\left(B_1,\R^{n}  \right)$ such that $\text{deg} (v,B_1,\cdot ) = \text{deg} (\tilde v,B_1,\cdot) \notin L^{p}\left( \R^{n} \right)$. 
\bibliographystyle{plain}
 \bibliography{degreebib}

\end{document}